\documentclass{amsart}

\usepackage{mathrsfs}

\usepackage{amsmath,amsthm,amssymb}
\usepackage[margin=1in]{geometry}
\usepackage{graphicx}
\usepackage{caption}
\usepackage{enumitem}
\usepackage{mathrsfs}
\usepackage{xcolor}
\usepackage{multicol}
\usepackage{hyperref}
\usepackage[noabbrev,capitalise,nameinlink]{cleveref}

\newtheorem{thm}{Theorem}[section]
\newtheorem{lem}[thm]{Lemma}
\newtheorem{corollary}[thm]{Corollary}

\newtheorem{example}[thm]{Example}
\newtheorem{prop}[thm]{Proposition}
\newtheorem{proposition}[thm]{Proposition}
\theoremstyle{definition}
\newtheorem{defn}[thm]{Definition}
\newtheorem{rmk}[thm]{Remark}

\newcommand{\rspace}[1]{\mathcal{R}(#1)}
\newcommand{\ospace}{\mathcal{R}_{\mathcal{O}}}

\newcommand{\bR}{\mathbb{R}}

\newcommand{\fJ}{\mathfrak{J}}

\newcommand{\sT}{\mathscr{T}}

\def\ba{\mathbf{a}}
\def\bv{\mathbf{v}}
\def\perm_#1{P(#1)}
\def\wperm_#1{P^W(#1)}
\def\cS{\mathcal{S}}
\DeclareMathOperator{\conv}{conv}
\def\Po{\mathrm{Po}}
\def\gsimp{\mathrm{GenS}}
\def\psimp{\mathrm{PersistS}}

\title{Persistent Subdivisions of Coxeter Permutahedra}

\author[T. Blanton, J. De Loera, M. Sherman-Bennett]{Timothy Blanton, Jes\'us A. De Loera, \and Melissa Sherman-Bennett}

\date{\today}

\keywords{permutahedron, Coxeter group, triangulation, matroid, generalized permutahedra, Coxeter matroid}

\usepackage[maxbibnames=99,sorting=nyt,giveninits,maxnames=10,backend=bibtex,block=space]{biblatex}
\usepackage{subcaption}
\begin{document}

\maketitle

\begin{abstract}
    We investigate the realizations of Coxeter permutahedra which are also Coxeter matroid polytopes; these are polytopes of the form $\conv(W \cdot \ba)$ where $W$ is a finite Coxeter group acting on $\mathbb{R}^n$ and $\ba$ is generic. Our main focus is how the geometric properties of $\conv(W \cdot \ba)$ change as $\ba$ changes, with particular attention to persistent simplices, triangulations, and subdivisions.
\end{abstract}

\section{Introduction}

The permutahedron is a classical polytope associated with the symmetric group, first studied by Schoute in 1911 \cite{Schoute1911}. Its vertices are labeled bijectively by the elements of $S_n$ and its geometry reflects the combinatorics of $S_n$. Its one-skeleton is the Hasse diagram of the weak order. Its face lattice is isomorphic to the lattice of cosets of parabolic subgroups (ordered by inclusion), or equivalently, to ordered set partitions of $[n]$ (ordered by coarsening).  The best known realization of the permutahedron is the convex hull of all permutation vectors $\{(w(1), \dots, w(n)): w \in S_n\}$, which we call the \emph{regular permutahedron}. This polytope has a myriad of interpretations and connections to other objects in algebraic and geometric combinatorics (see \cite{BjornerBrenti05,Humphreys90,HolwegLangeThomas2011,Post09,Pons2023} and the references therein). The regular permutahedron is the graphical zonotope associated to the complete graph; the Minkowski sum of the type A roots; the flag matroid polytope of the uniform complete flag matroid; the secondary polytope of a simplicial prism \cite{DLRS10}; and the moment polytope of a generic torus orbit in the complete flag variety.
The normal fan of the regular permutahedron is the famous braid arrangement, providing a connection to algebraic geometry. For instance, \cite{DeConciniProcesi1995} constructs the permutohedral variety---the toric variety whose fan is the braid arrangement---which has played an important role in the Chow theory of matroids.

The permutahedron is a ``type A" object which can be generalized to arbitrary finite Coxeter groups $W$. Such groups act on $\mathbb{R}^n$ by reflections across hyperplanes, and the normal vectors to the hyperplanes form the corresponding (not necessarily crystallographic) root system $\Phi$. The union of the reflecting hyperplanes forms the $W$-braid arrangement (see \cite{BjornerBrenti05,Humphreys90} for more detail). The $W$-permutahedron again has vertices labeled by $W$; one-skeleton equal to the Hasse diagram of the weak order on $W$; and a face lattice isomorphic to the lattice of cosets of parabolic subgroups of $W$. Many of the connections mentioned above between the permutahedron and other topics in algebraic and geometric combinatorics extend to the $W$-permutahedron. For example, there is a \emph{regular $W$-permutahedron}, which is the Minkowski sum of the positive roots of $\Phi$, or alternately, the convex hull of the orbit $W \cdot \ba$ where $\ba$ is the sum of positive roots. The regular $W$-permutahedron has normal fan equal to the $W$-braid arrangement. The regular $W$-permutahedron is a Coxeter matroid polytope for $W$, and when $W$ is crystallographic, it is the moment polytope of a generic torus orbit in $G/B$ \cite{BGW}.

There are many other realizations of the $W$-permutahedron besides the regular one. Recall that the \emph{realization space} $\mathcal{R}(P)$ of a polytope $P$ is a semi-algebraic set inside $(\bR^{\dim P})^{\text{Vertices}(P)}$ encoding (the vertices of) all polytopes with the same face lattice as $P$ (see \cite{Bjorneretal1999, RichterGebert96} for formal definitions). The entire $\mathcal{R}(P)$ is a mysterious, difficult object even for well-studied polytopes such as the $W$-permutahedron. However, since the face lattice of the $W$-permutahedron is preserved under the $W$-action, it is natural to consider the realizations where the vertices themselves are invariant under $W$-action. We call such realizations \emph{orbit $W$-permutahedra}, and the corresponding subset of the realization space the \emph{orbit realization space}; these realizations are the central focus of our results.

It is not hard to show that the $W$-orbit permutahedra are exactly the polytopes $\wperm_{\ba}:=\conv(W \cdot \ba)$, where $\ba$ is a point in $\mathbb{R}^n \setminus H^W$. That is, orbit permutahedra are exactly the convex hulls of generic $W$-orbits. The orbit realization space $\ospace^W$ is the subset of $\rspace{\wperm_\ba}$ consisting of orbit permutahedra. This class of realizations has been studied before. When $W=S_n$, their facet inequalities are described by a classical result of Rado \cite{Rado52}, and their volume and Ehrhart polynomials were studied in the beautiful work of Postnikov~\cite{Post09}. Other combinatorial properties when $W=S_n$ are described in \cite{HolwegLangeThomas2011,Pons2023}. For general $W$, orbit permutahedra are exactly the realizations of the $W$-permutahedron which are Coxeter matroid polytopes in the sense of \cite{BGW}; they are in bijection with points in the interior of the $W$-symmetric $\Phi$-submodular cone \cite{ACEP}.

\vskip .2cm

\paragraph{\bf Our results:} Now we summarize our results.
We first show in (\cref{thm:realization-space}) is that the orbit realization space $\ospace^W$ is homeomorphic to an open ball. This 
opens up the central question of this paper: how do geometric properties of $\wperm_\ba$ change as $\ba$ varies continuously in one chamber of $H^W$? We investigate this question with a particular eye towards properties that \emph{persist} as $\ba$ varies. In particular we investigate which triangulations persist. Triangulations of permutahedra is a topic that has been studied for several years (see, e.g., \cite{BL}).

Our initial motivation for this question was \cite[Theorem 1.5]{DSBW25}, which provided many triangulations of $\wperm_\ba$ that persist for all choices of $\ba$. We characterize the persistent triangulations of $\wperm_\ba$: they are exactly the triangulations whose individual simplices are persistent (\cref{thm:persistent-triangulation}). We then study persistent simplices from a number of perspectives, providing some sufficient conditions and asymptotics. Persistent simplices are closely related to the behavior of the \emph{oriented matroid} \cite{Bjorneretal1999} of the point configuration $W \cdot \ba$ as $\ba$ varies, which turns out to be quite subtle (see Section~\ref{warmup}).

We also present some results on persistent subdivisions and triangulations. As we mentioned above, $\wperm_\ba$ is a Coxeter matroid polytope. We show that all \emph{matroidal} subdivisions of $\wperm_\ba$ are persistent (and in fact one can relax $\wperm_\ba$ to an arbitrary Coxeter matroid polytope $P$ for $W$). When $P$ is a type A flag matroid polytope, these subdivisions are closely related to the tropical flag variety and the flag Dressian \cite{BEZ}; specifically, the \emph{regular} matroidal subdivisions of $P$ correspond to cones in the flag Dressian.

\vskip .2cm

\paragraph{\bf Structure of the paper:} We begin with Section~\ref{sec:why-persistence-matters} on some general reasons why persistence of structures is interesting from a combinatorial perspective. In Section \ref{warmup}, we turn to our specific case of interest, and give definitions and an exploration of $W$-permutahedra for small $n$. This is an experimental investigation of what is preserved or persists under different choices of $\ba$, focusing on simplices and coplanarity. We provide some numerics, including the number of different oriented matroids possible for the point configurations $W \cdot \ba$. Next, in Section \ref{sec:realization} we discuss the contractibility of the orbit realization space, showing that for orbit realizations the space is homeomorphic to an open cone. In Section \ref{persist} we give a sufficient and necessary condition for a triangulation to persist, and some sufficient conditions for simplices to persist. We also briefly discuss the case of ``generic" triangulations and simplices, which persist for most $\ba$ rather than all $\ba$. In Section~\ref{sec:asymptotics} we give some asymptotics on the number of generic and persistent simplices using a ``lifting from facets" argument. In Section~\ref{sec:matroidal-subdivisions} we show that matroidal subdivisions of $W$-matroid polytopes are persistent. 

\vskip .2cm

\paragraph{\bf Acknowledgements:} TB was partially supported by NSF grant DMS-2444020, DMS-2348578, and DMS-2434665. JDL was partially supported by DMS-2348578 and DMS-2434665. MSB was partially supported by DMS-2444020. We would like to thank Paul Breiding for assistance with the Julia package \texttt{HypersurfaceRegions}. An extended abstract advertising some of these results appeared in FPSAC 2026; we thank the FPSAC referees for helpful feedback on the extended abstract.

\section{Interlude: Why Do Persistent Structures matter in Combinatorics?}\label{sec:why-persistence-matters}

The reader may wonder, why should I care about persistent subdivisions, triangulations or normal fans? While the focus of our paper is specifically about persistence for Coxeter realizations of permutahedra and generalized permutahedra, we outline two good reasons why researchers may care about persistent triangulations and subdivisions, or persistent normal fans in general polytopes.

\paragraph{\bf Persistence of triangulations always yields volume and integral formulas} First, the existence of persistent triangulations is in fact very useful to combinatorially compute not just the volume but the value of any integral of a polynomial over a persistent triangulation.

In order to find the formulas or expressions for the volume of any realization of $P_n$, we triangulate it with a persistent triangulation and find the volume of 
each simplex. For example, when the face lattice does not change, a persistent triangulation is the \emph{barycentric triangulation}. This is a classical and widely used triangulation in computational and combinatorial geometry \cite{DLRS10}. Intuitively, after determining the faces of $P$ (this is done through their supporting hyperplanes that define facets), first each two-dimensional face is subdivided by introducing its barycenter and coning to the edges. $k$-dimensional faces are triangulated inductively by coning to the barycenters of all its $k-1$-dimensional bounding faces. This produces the canonical barycentric subdivision of $P$. Once we have a triangulation that is invariant, we can compute the volume or integral of the polytope by computing them in each separate simplex. Note that already the volume formula of a simplex can be directly read from its vertices:

\begin{equation}
    \operatorname{vol}(\Delta)
    =
    \frac{1}{(d-1)!}
    \det \left(\left[ s_2 - s_1, s_3 - s_1, \dots, s_{d} - s_1, \frac{a}{\|a\|}\right]\right),
    \label{eqn:determinant_slice}
\end{equation}
where $a$ is any vector orthogonal to the affine span of the simplex. With two different realizations of $P$ the simplices do not change, thus the above formula does not change and the volume of $P$ is then the sum of the volumes of these persistent simplices.

More generally, to compute the integral of a polynomial $f$, our approach again is to use any  persistent triangulation as outlined before.
For each simplex in the triangulation, we compute formulas for the integral over that simplex using \cite[Lemma 8]{BBDeLKV11}, which provides a closed-form expression for the integral of a power of a linear form over a simplex. Let $M \in Z_{\geq 0}$, and let $\Delta = \conv(s_1, s_2,\dots, s_{n+1})$ be an $n$-dimensional simplex in $\mathbb{R}^d$. For any linear form $\ell$ on $\mathbb{R}^d$,
\begin{equation}
\int_{\Delta} \ell^M \, \mathrm{d}x
  = n!\,\mathrm{vol}(\Delta)\,
    \frac{M!}{(M+n)!}
    \sum_{\substack{k \in \mathbb{N}^{n+1}\\ |k|=M}}
    \langle \ell, s_1 \rangle^{k_1}
    \cdots
    \langle \ell, s_{n+1} \rangle^{k_{n+1}},
\label{eqn:moment}
\end{equation}
where we integrate with respect to the standard Lebesgue measure.
In the case of $M=0$, the integral in the formula above, reduces to computing the volume of a simplex, which we saw only depends on the coordinates of the vertices of the persistent simplex. Finally, to integrate an arbitrary polynomial $f$, we simply need to rewrite it as a sum of powers of linear forms.

\paragraph{\bf When the normal fan is preserved, Ehrhart formulas are preserved.}
In our case, for orbit permutahedra, not only the face lattice is preserved but in fact they have the same \emph{normal fan}, or equivalently, the same \emph{tangent cone}. We recall that for a convex polytope $P \subseteq \mathbb{R}^d$. For each face $F \leq P$, the \textbf{normal cone} of $F$ is
\[
N_P(F) = \{ c \in \mathbb{R}^d : \langle c, x \rangle \geq \langle c, y \rangle \text{ for all } x \in F,\, y \in P \}.
\]
Equivalently, $N_P(F)$ is the set of linear objectives $c$ for which
$F \subseteq \arg\max_{P} \langle c, \cdot \rangle$, i.e., every point of $F$ is optimal.

The \textbf{normal fan} of $P$ is the complete polyhedral fan
\[
\mathcal{N}(P) = \{ N_P(F) : F \leq P \},
\]
whose cones are exactly the normal cones of all faces of $P$ (including $P$ itself, whose normal cone is $\{0\}$, and the empty face, whose normal cone is all of $\mathbb{R}^d$), but the most important cones are the normal cones at vertices because we can recover the others from there. Two polytopes $P$ and $Q$ have the same normal fan if and only if they are \textbf{normally equivalent}, which holds, in particular, when $Q$ is a Minkowski summand of a dilate of $P$. This happens, for instance, in the case of generalized permutahedra. Similarly, for a vertex $v$ of a polytope $P$, the \emph{tangent cone} of $P$ at $v$ is defined as 
\begin{equation*}
    Cone_P(v) := \left\{\, v + w \; | \;  v + \epsilon  w \in P \text{ for some } \epsilon > 0 \, \right\}.
\end{equation*}
The key fact is that if one knows the tangent cone at a vertex one can recover the normal cone at a vertex, and vice versa, using polarity. A motivating example of families of polytopes with the same tangent  normal fans are matroid polytopes \cite{DeLoeraHawsKoeppe} and more generally for generalized W-permutahedra \cite{BjornerBrenti05,Pons2023,CeballosPons2024}. 

For our purposes, having same fan structure within a set of realizations (like Coxeter realizations) is notable because then one can conclude, from the theory developed by Barvinok, Stanley and others \cite{BarvinokPommersheim1999,Barvinok2008,Stanley2012}, that their \emph{Ehrhart polynomials} are, in fact, recoverable in a single parametric formula. Recall that for
$P \subseteq ^n$ be a rational polyhedron. The \emph{multivariate
  generating function} of $P$ is defined as the formal Laurent series in ${\mathbb Z}[[z_1,\ldots,z_n,z_1^{-1},\ldots,z_n^{-1}]]$
\begin{equation*}
\tilde g_P ( \mathbf{z} ) = \sum_{\alpha \in P \cap {\mathbb Z}^n} \mathbf{z}^\alpha ,
\end{equation*}
where we use the multi-exponent notation $ z^{\alpha} =
\prod_{i=1}^n z_i^{\alpha_i}$. This is sometimes called the \emph{integer-point transform} \cite{beckrobins} as its monomials are in bijection with the integer points of $P$. If $P$ is bounded, $\tilde g_P$ is a Laurent polynomial, which we consider as a rational function~$g_P$.
If $P$ is not bounded but is pointed (i.e., $P$ does not contain a
straight line), there is a non-empty open subset $U\subseteq {\mathbb C}^n$ such
that the series converges absolutely and uniformly on every compact
subset of~$U$ to a rational function~$g_\P$ (see \cite{BarvinokPommersheim1999} and
references therein).  If $P$ contains a straight line, we set $g_\Po =
0$.  The rational function $g_P\in {\mathbb Q}(z_1,\dots,z_n)$ defined in this
way is called the \emph{multivariate rational generating function}
of~$P\cap {\mathbb Z}^n$. Barvinok \cite{Barvinok2008} proved that in polynomial 
time, when the dimension of a polytope is fixed, $g_P$ can be represented
as a short sum of rational functions 
\begin{equation*}
 g_P( z) =  \sum_{i \in I} \epsilon_i \frac{z^{ a_{i}}}{\prod_{j=1}^{n} (1 - z^{ b_{ij}} )},
\end{equation*}
where $\epsilon_i \in \{-1,1\}$.
The multivariate generating function formula for the lattice points can be computed from the tangent cones. Moreover, the Ehrhart polynomial can be recovered directly from the rational function expression using special substitutions. See \cite{BarvinokPommersheim1999,Barvinok2008,beckrobins} for details. 

Thus, we can apply the above algorithms to obtain formulas for the volumes or Ehrhart polynomials of permutahedra, matroid polytopes, generalized permutahedra, or any other family of polytopes for which the triangulation, tangent cone or normal fan do not change. 
An illustrative example is the celebrated paper by Postnikov stating formulas the volumes and Ehrhart polynomials of orbit permutahedra \cite{Post09}. Another example: to obtain formulas for Ehrhart polynomials of matroid polytopes, \cite{DeLoeraHawsKoeppe} used the fact that the tangent cones were determined by the combinatorial information alone. 

In summary, if a family of polytopes admits a triangulation whose simplices persist across all realizations, then volumes and any other polynomial integrals can be computed simplex-by-simplex using the same combinatorial subdivision in every realization. Similarly, persistence of the normal fan, or its tangent-cone structure, leads to formulas for Ehrhart and lattice-point formulas for all polytopes with the same fan.

\section{Definitions and warmup} \label{warmup}
 As in the introduction, let $W$ be a finite reflection group acting on a Euclidean space $V$. We follow the conventions of \cite{Humphreys90}.

For each $w\in W$, we may choose a vector $\alpha\in V$ such that $w$ sends $\alpha$ to $-\alpha$. We thus say that $w=s_\alpha$ and $\alpha, - \alpha$ are called \emph{roots} for $W$. The collection of all roots is called a \emph{root system} for $W$, labeled by $\Phi$.

We call a subset of roots \emph{positive} if it consists of exactly the roots on one side of some hyperplane in $V$. A set of positive roots is denoted $\Phi_+$. We call a subset $\Delta$ of roots \emph{simple} if all $\alpha\in \Phi$ are linear combinations of the elements of $\Delta$ with either all coefficients nonpositive or all coefficients nonnegative. Each positive root system contains a unique simple root. Similarly, each simple root system is contained in a unique positive root system. For a fixed choice of positive root system $\Phi_+$ (equivalently, simple system $\Delta$), each positive root system can be written as $w \cdot \Phi_+$ for a unique $w\in W$.

 Note also that each $s_\alpha\in W$ defines a hyperplane $H_\alpha$, consisting of points orthogonal to $\alpha$.

 \begin{defn}
     The hyperplane arrangement $\bigcup_{\alpha \in \Phi}H_\alpha$ is the \emph{$W$-braid arrangement}. The \emph{chambers} of this arrangement are in bijection with positive subsets of roots. For a set of positive roots $\Phi_+$, the corresponding chamber is \[B_{\Phi_+}=\{c\in V\mid \langle c,\alpha\rangle>0 \text{ for all }\alpha\in\Phi_+\}.\] 
     We will fix throughout a chamber $B$ as the fundamental chamber; with this choice, the chambers $B_{\Phi_+}$ are in bijective correspondence with the elements of $W$ and so we also write $B_w$.
 \end{defn}

As in the introduction, we denote $P^W(\ba)=\conv(W\cdot \ba)$ for $\ba\in B$. We first justify that regardless of $\ba \in B$, $P^W(\ba)$ deserves the name of ``$W$-permutahedron".
 
 \begin{lem}\label{lem:consistent-type}
    For any $\ba \in B$, the normal fan of $\wperm_\ba$ is the $W$-braid arrangement. 
\end{lem}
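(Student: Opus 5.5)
We will show that for every chamber $B_w$ and every $c\in B_w$, the linear functional $\langle c,\cdot\rangle$ is maximized over $W\cdot\ba$ uniquely at the point $w\cdot\ba$. This gives the claim. Since $\wperm_\ba=\conv(W\cdot\ba)$, the normal cone of the vertex $w\cdot\ba$ is then the closure of $B_w$. These closed chambers are full-dimensional and cover $V$, so the maximal cones of the normal fan are exactly the closed chambers of the $W$-braid arrangement. Both fans are complete, and every face of a polyhedral fan is an intersection of maximal cones. Hence the two fans coincide, and in particular the vertices $w\cdot\ba$ are pairwise distinct.

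\emph{Reduction.} First reduce to $w=e$ using $W$-equivariance. Because $W$ acts by orthogonal maps, $\langle c, u\cdot\ba\rangle=\langle w^{-1}c, w^{-1}u\cdot\ba\rangle$. Moreover $c\in B_w=w B$ if and only if $w^{-1}c\in B$, under the convention $B_w=wB$, which matches $w\cdot\Phi_+$ as described before the definition. So it suffices to show: for $c\in B$ and $u\neq e$, we have $\langle c,\ba\rangle>\langle c,u\ba\rangle$.

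\emph{Key step.} Here we use the standard fact from \cite{Humphreys90}: for $\lambda$ in the closed fundamental chamber $\overline{B}$, the vector $\lambda-u\lambda$ is a nonnegative combination of positive (equivalently, simple) roots. We prove it by induction on the length $\ell(u)$. Write $u=s_\alpha u'$ with $\alpha$ simple and $\ell(u')=\ell(u)-1$, so that $u'^{-1}\alpha\in\Phi_+$. Then
\[
\lambda-u\lambda=(\lambda-u'\lambda)+(u'\lambda-s_\alpha u'\lambda)=(\lambda-u'\lambda)+\frac{2\langle u'\lambda,\alpha\rangle}{\langle\alpha,\alpha\rangle}\alpha .
\]
The coefficient of $\alpha$ satisfies $\langle u'\lambda,\alpha\rangle=\langle\lambda,u'^{-1}\alpha\rangle\ge 0$.

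Apply this with $\lambda=\ba\in B$. In that case the inequality is strict, $\langle \ba,u'^{-1}\alpha\rangle>0$, at every step. For $u\neq e$ the sum therefore contains at least one term with a strictly positive coefficient on a positive root. Pairing with $c\in B$, which is strictly positive on every positive root, gives $\langle c,\ba-u\ba\rangle>0$.

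\emph{Main obstacle.} The main care is needed in the length/exchange fact $\ell(s_\alpha u')>\ell(u')\iff u'^{-1}\alpha>0$ and in keeping the left/right conventions for $B_w$ consistent. Both are standard in \cite{Humphreys90} and we will cite them rather than reprove them. The final passage from "normal cones of vertices are closed chambers" to "normal fan equals the arrangement" is routine.
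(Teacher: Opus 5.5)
Your argument is correct. Its first half matches the paper. The paper's inclusion $\overline{B_v}\subseteq N_{\bv}$ rests on the same fact you prove by induction on length, namely that $\bv-w\cdot\bv$ is a nonnegative combination of simple roots. The paper simply cites Humphreys and applies this fact in the chamber $B_v$ with $c\in\overline{B_v}$, whereas you reduce to $B$ by equivariance and then pass to closures.

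The real difference is the reverse inclusion $N_{\bv}\subseteq\overline{B_v}$. The paper obtains it locally. For $c\in N_{\bv}$ it compares $\bv$ only with the neighbouring vertices $s_\alpha\cdot\bv$, $\alpha\in\Delta_v$, and the identity $s_\alpha\bv=\bv-\frac{2\langle\bv,\alpha\rangle}{\langle\alpha,\alpha\rangle}\alpha$ with $\langle\bv,\alpha\rangle>0$ forces $\langle c,\alpha\rangle\geq 0$. You obtain it globally from strict uniqueness of the maximizer on every open chamber. No point of $N_{w\cdot\ba}$ can lie in an open $B_u$ with $u\neq w$, since there $u\cdot\ba\neq w\cdot\ba$ is the unique maximizer. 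Hence the interior of $N_{w\cdot\ba}$ lies in $B_w\cup\bigcup_{\alpha}H_\alpha$, and therefore in $\overline{B_w}$ because the walls are nowhere dense. Finally, $N_{w\cdot\ba}$ is the closure of its interior. You label this step routine, and it is, but it is the one place your write-up compresses a genuine argument, so it deserves these lines.

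Each route buys something. Yours gives, for free, that the $|W|$ orbit points are pairwise distinct and are all vertices, which the paper uses without comment. The paper's gives an explicit description of $N_{\bv}$ by the root directions of the edges at $\bv$. It is this local argument that the paper reuses almost verbatim, with $\Phi_v$ in place of $\Delta_v$, for $W$-matroid polytopes in Lemma~\ref{lem:consistent-type-gen}, where a vertex's normal cone is a union of several chambers.
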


\begin{proof}
    Let $v \in W$ and $\bv = v \cdot \ba$. We start by showing the normal cone of $\bv$ is $\overline{B_v}$.
     Recall that the normal cone of $\bv$, $N_\bv$, is given by \[N_\bv:=\left\{c\in (\bR^d)^\ast\mid  \langle c,\bv\rangle = \underset{y\in \wperm_\ba}{\max} \langle c,y\rangle\right\}\] 
    
    Let $\Delta_v$ be the set of simple roots defining $B_v$. We have that for any $w\in W$, 
    \[\bv-w\cdot\bv=\underset{\alpha\in \Delta}{\sum}k_\alpha \alpha\]
    where $k_\alpha\geq 0$. (see \cite[Page 22]{Humphreys90}). For all $c\in \overline{B_v}$, dotting both sides with $c$ gives us that 
    \[\langle c,\bv-w\cdot\bv\rangle=\left\langle c,\underset{\alpha\in \Delta}{\sum}k_\alpha\alpha\right\rangle=\underset{\alpha\in\Delta}{\sum}k_\alpha \langle c,\alpha\rangle\] 
    Since $k_\alpha\geq 0$ and $\langle c,\alpha\rangle \geq0$, the entire sum is non-negative and so $\langle c,\bv\rangle \geq \langle c,w\cdot\bv\rangle$. As it suffices to compute maximums over the vertices of a polytope, $c\in N_\bv$. Thus $\overline{B_v}\subseteq N_\bv$.

    Let $c\in N_\bv$. By definition, $\langle c,\bv\rangle \geq \langle c,y\rangle $ for all $y\in P$, in particular it holds for $y=w\cdot \bv$. Let $w=s_\alpha$ for some choice of $\alpha\in \Delta$. Then $w\cdot \bv=\bv-2\frac{\langle v,\alpha\rangle}{\langle \alpha,\alpha\rangle}\alpha$. Substituting this into the original inequality gives us that 
    \[\langle c,\bv\rangle \geq \left\langle c,\bv-2\frac{\langle v,\alpha\rangle}{\langle \alpha,\alpha\rangle}\alpha\right\rangle\]
    \[0\geq -2\frac{\langle \bv,\alpha\rangle}{\langle \alpha,\alpha\rangle}\langle c,\alpha\rangle\]
    Since $\bv\in B_v$, $\langle \bv,\alpha\rangle >0$. As $\langle \alpha,\alpha\rangle >0$, we must have that $\langle c,\alpha\rangle \geq0$. Since this holds true for all $\alpha\in \Delta$, we have that $c\in \overline{B_v}$ by definition. Thus $N_\bv=\overline{B_v}$.

    Since the normal cone of each vertex is $\overline{B_v}$, the normal fan of $\wperm_\ba$ is the $W$-braid arrangement and does not depend on the choice of $\ba\in B$. As the normal fan of a polytope determines the combinatorial type, the combinatorial type of $\wperm_\ba$ does not depend on the choice of $\ba\in B$.
\end{proof}

As the vertices of $\wperm_\ba$ are identified with $W$, the vertices of $\wperm_\ba$ and $\wperm_{\ba'}$ are in natural correspondence and it makes sense to talk about structure ``persisting" for different choices of base point. To streamline notation, we introduce the following.

\begin{defn}
For $\cS \subseteq W$ of size $d+1$ and $\ba \in B$, $\cS$ is called an \emph{$\ba$-simplex} if $\conv(\cS \cdot \ba)$ is a $d$-dimensional simplex and is called \emph{$\ba$-coplanar} otherwise. A collection $\{\cS_i\}_{i=1}^p$ of $\ba$-simplices is called an \emph{$\ba$-triangulation} if $\{\conv(\cS_i \cdot \ba )\}_{i=1}^p$ is a triangulation of $\wperm_\ba$. That is, $\wperm_\ba = \cup_{i} \conv(\cS_i \cdot \ba)$ and for all $i,j$,  $\conv(\cS_i \cdot \ba)$ and $\conv(\cS_j \cdot \ba)$ intersect in a common face (possibly empty).   
\end{defn}

We note that the set of $\ba$-simplices is exactly the set of bases of the \emph{matroid} of $\wperm_\ba$. As we shall see in subsequent sections, the set of $\ba$-simplices and of $\ba$-triangulations is heavily dependent on the choice of $\ba$. So we are more interested in the 
following notion.

\begin{defn}
We say that $\cS \subseteq W$ of size $d +1$ is a \emph{persistent simplex} (respectively, is \emph{persistently coplanar}) if it is an $\ba$-simplex for every choice of $\ba \in B$ (respectively, is $\ba$-coplanar for all $\ba \in B$). A collection $\{\cS_i\}$ of persistent simplices is a \emph{persistent triangulation} if $\{\cS_i\}$ is an $\ba$-triangulation for each $\ba \in B$.
\end{defn}

Through several examples we illustrate these concepts in the next three subsections.

\subsection{The 2-dimensional Case}

For $W$ whose associated vector space $V$ is $2$ dimensional, the $W$-permutahedron is a convex $n$-gon. Notice that $n=|W|$. In particular, every $n$-gon whose vertices are invariant under the $W$ action is an orbit $W$-permutahedron. In this case, all 3-element subsets $\cS \subseteq W$ are persistent simplices. In addition, all triangulations are persistent.

The realization space of the $W$-permutahedron is the realization space of a convex $n$-gon, which is contractible. However, not all combinatorial permutahedra are orbit permutahedra. One can easily find hexagons in $\bR^3$ which are not invariant under permuting coordinates. For emphasis, we record this in a proposition.

\begin{prop}
    The realization space of the $W$-permutahedron is strictly larger than the orbit realization space.
\end{prop}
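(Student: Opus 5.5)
The plan is to exhibit one explicit realization of the $W$-permutahedron that is not of the form $\wperm_\ba$, and to certify this with an orbit-invariant that $\wperm_\ba$ always has. Since the proposition is stated right after the 2-dimensional discussion, I will work in the dihedral case: $W$ acting on a $2$-dimensional space $V$, so that $\wperm_\ba$ is a convex $n$-gon with $n = |W|$. The realization space then consists of all convex $n$-gons in $V$ with a chosen labeling of the vertices by $W$ compatible with the cyclic order.

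First I record the invariant. For any $\ba \in B$ and any $w \in W$, the map $w$ permutes the vertex set $W \cdot \ba$, so it is an isometry of $V$ preserving $\wperm_\ba$. Consequently $\wperm_\ba$ has the full symmetry group $W$, and in particular all its vertices lie at the same distance $\|\ba\|$ from the origin, since $W$ acts orthogonally. The same holds for every orbit realization, and hence for every point of $\ospace^W$.

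Next I construct a realization violating this. I take the regular realization and move a single vertex $v\cdot\ba$ radially outward by a small $\epsilon>0$. For small $\epsilon$ the polygon stays strictly convex, and the cyclic order of the vertices is unchanged. The face lattice is therefore the same, and this is a point of $\rspace{\wperm_\ba}$. Its vertices, however, no longer lie on a common circle about the origin. Because $W$ contains rotations and fixes the origin, the vertex set is no longer $W$-invariant, so this realization is not an orbit permutahedron.

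The one step needing care is that the realization space may be taken modulo affine transformations, in which case one must check the perturbed polygon is not affinely equivalent to any $\wperm_{\ba'}$. To handle this, I would use a perturbation that breaks an affine invariant. For $n \ge 6$, the regular $n$-gon has pairs of parallel opposite edges, and moving one vertex slightly destroys this parallelism, which affine maps preserve. Alternatively, any orbit $n$-gon has an affine symmetry group of order at least $n/2 \geq 3$, while a generic small perturbation has trivial affine symmetry. Finally, to get the higher-dimensional statement, including the example in the paper of hexagons in $\bR^3$, I would apply the same idea to $W\cdot\ba$: perturb one vertex slightly along a direction inside its normal cone $\overline{B_v}$ given by Lemma~\ref{lem:consistent-type}. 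Small perturbations of a simple polytope's vertices (the $W$-permutahedron is simple) preserve its combinatorial type, while they break the $W$-invariance.
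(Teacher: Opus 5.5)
Your two-dimensional argument is correct and is essentially the paper's. The paper just exhibits hexagons realizing the $S_3$-permutahedron whose vertex sets are not $W$-invariant. Your radial perturbation of one vertex produces exactly such a polygon, certified by the fact that every $P^W(\ba')$ has all its vertices on a circle about the origin.

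Your treatment of the affine quotient is also fine, once the invariant is stated for all orbit polygons rather than only the regular one. Every edge of $P^W(\ba')$ is parallel to a root, and there are $n$ edges but only $n/2$ root directions, so opposite edges are parallel for every $\ba'$. This also covers $n=4$, so the restriction $n\ge 6$ is unnecessary.

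The last paragraph, however, rests on a false claim. Small perturbations of the \emph{vertices} preserve the combinatorial type of a \emph{simplicial} polytope. For a \emph{simple} polytope it is the facet inequalities that can be perturbed freely, not the vertices.

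Concretely, in the $S_4$-permutahedron each vertex $v\cdot\ba$ lies on three facets (two hexagons and a square). The other vertices of each of these facets still span that facet's plane. If the labeled face lattice survived moving only $v\cdot\ba$, the new point would have to lie on all three planes, whose intersection is $\{v\cdot\ba\}$. So every nonzero displacement of that single vertex, in particular one into its normal cone, changes the face lattice. The same problem occurs for the hexagon in $\bR^3$ if the displacement has a component along $(1,1,1)$, which $\overline{B_v}$ allows.

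To handle rank $\ge 3$, perturb a facet instead: tilt one facet hyperplane of $P^W(\ba)$ slightly. Simplicity guarantees the combinatorial type survives. The result is not affinely equivalent to any $P^W(\ba')$: a 2-face meeting the tilted facet in an edge loses the parallelism of its opposite edges. By contrast, every 2-face of $P^W(\ba')$ is a translate of a rank-2 orbit polygon and so keeps that parallelism. If you do not quotient by affine maps, a generic translation of $P^W(\ba)$ already suffices.
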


\begin{figure}[h]
    \centering
    \begin{subfigure}[b]{0.4\textwidth}
        \includegraphics[height=0.75\linewidth]{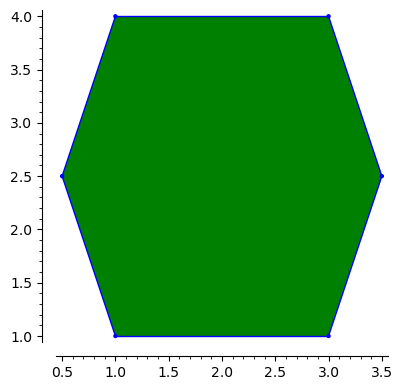}
    \end{subfigure}
    \begin{subfigure}[b]{0.4\textwidth}
        \includegraphics[height=0.75\linewidth]{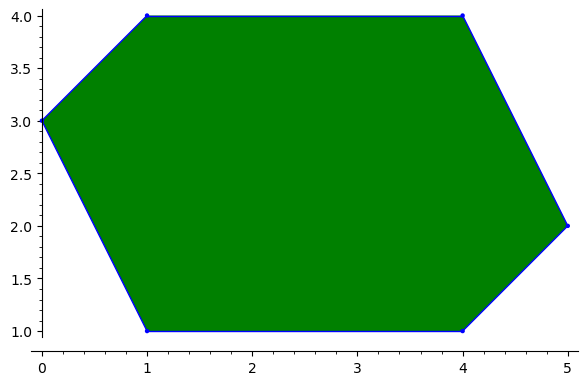}
    \end{subfigure}
    \caption{Hexagons which are not (projections of) an orbit $S_3$-permutahedron.}
    \label{fig:nonorbits}
\end{figure}

\subsection{The $S_4$ case}

Let $W=S_4$. Then we have that $B=\{\ba\in \bR^4\mid a_1<a_2<a_3<a_4\}$. In this case, $\wperm_\ba$ is a $3$-dimensional polytope in $\bR^4$. From the work of Steinitz \cite{Steinitz1922,SteinitzRademacher76}, the realization space of the $S_4$-permutahedron (modulo affine equivalence) is contractible. Again, the orbit realization space is a proper subset of the realization space. The geometry of orbit realizations strongly depends on $\ba$ (see \cref{fig:n=4}), as does the collection of $\ba$-simplices. For example, the permutations $\{(1234), (1342), (2314), (3421)\}$ form an $\ba$-simplex if and only if the polynomial \[a_4^2a_1 - a_4^2a_2 - a_1^2a_2 + a_4a_2^2 + a_1a_2^2 - a_2^3 - 2a_4a_1a_3 + a_1^2a_3 + a_2^2a_3 + a_4a_3^2 - a_2a_3^2\] does not vanish; if the polynomial vanishes, these permutations are instead $\ba$-coplanar. See Example~\ref{ex:dif-number-coplanar} for a different, concrete example of this phenomenon.

\begin{figure}
    \centering
    \begin{subfigure}[b]{0.32\textwidth}
        \centering
        \includegraphics[height=0.75\linewidth]{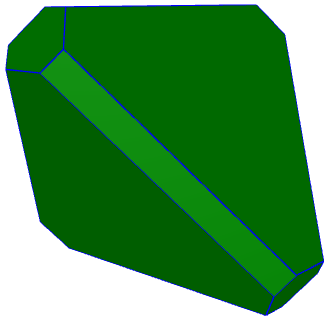}
        \caption{(101714,753589,861628,925275)}
    \end{subfigure}
    \begin{subfigure}[b]{0.3\textwidth}
        \centering
        \includegraphics[height=0.8\linewidth]{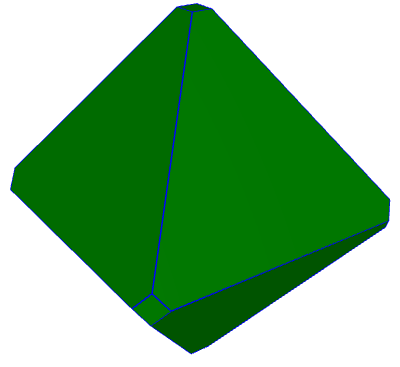}
        \caption{$(32919,67492,433170,464108)$}
    \end{subfigure}
    \hspace{0.3cm}
    \begin{subfigure}[b]{0.25\textwidth}
        \centering
        \includegraphics[height=\linewidth]{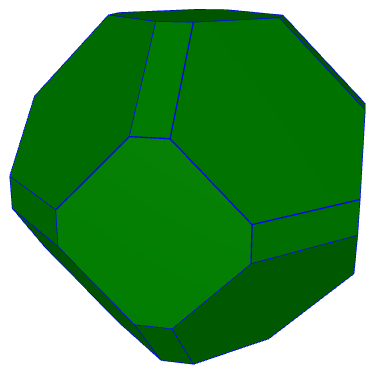}
        \caption{$(1,5,10)$}
    \end{subfigure}
    \caption{(A), (B): $S_4$ orbit permutahedra for different choices of $\ba$. (C) A $B_3$ orbit permutahedron. Displayed beneath each polytope is the choice of base point $\ba$.}
    \label{fig:n=4}
\end{figure}

\begin{figure}[h]
    \centering
    \begin{subfigure}[b]{0.46\textwidth}
        \centering
        \includegraphics[height=0.75\linewidth]{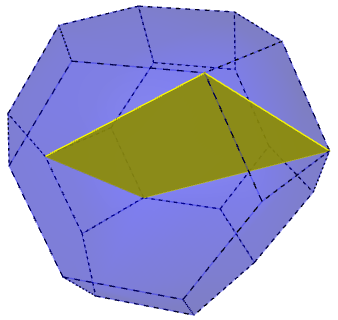}
        \caption{$(1,2,3,4)$}
    \end{subfigure}
    \begin{subfigure}[b]{0.46\textwidth}
        \centering
        \includegraphics[height=0.75\linewidth]{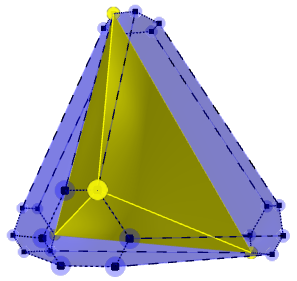}
        \caption{$(101714,753589,861628,925275)$}
    \end{subfigure}
    \caption{The convex hull of $\{(1234),(2143),(3412),(4321)\} \cdot \ba$ in two different orbit permutahedra.}
    \label{fig:Non-dimensional}
\end{figure}

\begin{example}\label{ex:dif-number-coplanar}
    Let $\cS=\{(1234),(2143),(3412),(4321)\}$. Then as shown in \cref{fig:Non-dimensional}, $\cS$ is an $\ba$-simplex for $\ba=(101714,753589,861628,925275)$ but is $\ba'$-coplanar for $\ba'= (1,2,3,4)$. In fact, there are $846$ $\ba'$-coplanar sets, while there are only $474$ $\ba$-coplanar sets.
\end{example}

\begin{prop}\label{prop:s4-classification}
    For subsets of size $4$ in $S_4$, there are $474$ persistently coplanar sets, $9,132$ persistent simplices, and $1,020$ which are simplices for some choices of $\ba$ and coplanar for others. The maximal number of $\ba$-coplanar sets for fixed $\ba$ is $846$, which is achieved for the regular permutahedron.
\end{prop}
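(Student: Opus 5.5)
This statement is a finite classification, so I will prove it by exact computation organized around polynomial identities rather than numerical sampling. Total count: $\binom{24}{4}=10{,}626$, and $474+9{,}132+1{,}020=10{,}626$, which gives a consistency check.

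For each 4-subset $\cS=\{w_1,\dots,w_4\}\subseteq S_4$, the points $w_i\cdot\ba$ lie in the hyperplane $\sum x_j=\sum a_j$. So $\cS$ is an $\ba$-simplex exactly when the $4\times 4$ matrix with rows $w_i\cdot\ba$ has nonzero determinant. Equivalently, I can use the $5\times 5$ matrix with rows $(1,w_i\cdot\ba)$; since all coordinate sums are equal, the affine and linear conditions agree up to the factor $\sum a_j$, which I will track. This gives a homogeneous polynomial $D_\cS(\ba)$ of degree $4$ (or $3$ after dividing out $\sum a_j$), with integer coefficients. The polynomial displayed earlier for $\{(1234),(1342),(2314),(3421)\}$ is an example of this.

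I then sort each $\cS$ into one of three classes.
\begin{itemize}
\item \emph{Persistently coplanar:} $D_\cS\equiv 0$ as a polynomial.
\item \emph{Persistent simplex:} $D_\cS\neq 0$ and $D_\cS$ has no zero in the open cone $B=\{a_1<a_2<a_3<a_4\}$.
\item \emph{Mixed:} $D_\cS$ vanishes somewhere in $B$. Since $B$ is open and connected, $D_\cS$ is nonzero on a dense open subset of $B$, so such an $\cS$ really is a simplex for some $\ba$ and coplanar for others.
\end{itemize}
Using the action of $S_4$ by left multiplication, together with $\ba\mapsto -w_0\ba$ symmetries preserving $B$, I can reduce to orbit representatives. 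This cuts down the work and gives a second check on the counts.

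To decide whether $D_\cS$ vanishes on $B$, I substitute $a_1=t$ and $a_{i+1}=a_i+u_i$ with $u_i>0$. After factoring out $\sum a_j$, translation invariance should allow eliminating $t$. The result is a homogeneous form in $u_1,u_2,u_3>0$ (after dehomogenizing, two variables). If all its coefficients share one sign, it never vanishes, which certifies a persistent simplex. Otherwise I look for an explicit rational point in $B$ where it vanishes or changes sign, which certifies the mixed class. The \textbf{main obstacle} is the leftover cases where the coefficients have mixed signs but the form might still be positive on the orthant. For these I would first try a Pólya-type multiplication by $(u_1+u_2+u_3)^k$ to get all coefficients positive. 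If that fails, I would compute real roots of the univariate restrictions, or use cylindrical decomposition in two variables.

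Finally, for the maximum number of coplanar sets, note that $\#\{\ba\text{-coplanar}\}=474+\#\{\cS \text{ mixed}: D_\cS(\ba)=0\}$. I must show that no $\ba\in B$ kills more than $372$ of the $1{,}020$ mixed determinants, with equality at $\ba=(1,2,3,4)$. The equality case is a direct evaluation. For the upper bound, I would use homogeneity and translation to restrict $\ba$ to the 2-parameter slice $(0,1,1+s,1+s+r)$. Points where many $D_\cS$ vanish simultaneously lie on intersections of these curves. So I would compute all pairwise intersections of the zero curves of the mixed $D_\cS$ inside the slice, and at each intersection point count how many $D_\cS$ vanish. A generic point lies on at most one or two curves, so the maximum occurs at one of these finitely many points. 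Checking that $(1,2,3,4)$ (i.e.\ $s=r=1$) gives the largest count completes the proof.
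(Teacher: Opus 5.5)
Your three-way classification is sound and is essentially what the paper does. The paper gives no written argument for this proposition. It relies on a computer enumeration (the linked repository), organized around the same determinant polynomials and the subdivision of $B$ by their zero loci shown in Figure~\ref{fig:s4realspace}. Your sign-pattern, P\'olya and CAD certificates make that classification step rigorous. One small slip: the rows $(1,w_i\cdot\ba)$ form a $4\times 5$ matrix, not a $5\times 5$ one. The cubic you want is its $4\times 4$ minor with one coordinate column deleted, and $D_\cS=\pm(\sum_j a_j)$ times that minor because the signed maximal minors form a multiple of the kernel vector $(-\sum_j a_j,1,1,1,1)$.

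The step that fails as written is the upper bound on the number of $\ba$-coplanar sets. The zero curves of the $1{,}020$ mixed determinants are far from general position. Their ``pairwise intersections'' are therefore not finite, and it is false that a generic point lies on only one or two of them.

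\begin{itemize}
\item Left multiplication by $u\in S_4$ only permutes coordinates, so $D_{u\cS}=\pm D_\cS$. Every curve is thus the zero locus of a whole left orbit.
\item Inequivalent sets also share factors. On the line $u_1=u_3$ one has $w_0\cdot\ba=c\mathbf{1}-\ba$ with $c=a_1+a_4$. So $\sigma w_0\cdot\ba$ is the reflection of $\sigma\cdot\ba$ through the centroid, and every set $\{\sigma,\sigma w_0,\tau,\tau w_0\}$ is coplanar there.
\item Concretely, the Klein group of Example~\ref{ex:dif-number-coplanar} has cubic $\pm(a_1+a_2-a_3-a_4)(a_1-a_2+a_3-a_4)(a_1-a_2-a_3+a_4)$. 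The set $\{e,w_0,\tau,\tau w_0\}$ with $\tau\cdot\ba=(a_2,a_3,a_1,a_4)$, which is not in the same left orbit, has cubic $\pm u_2(u_3-u_1)(u_1+u_2+u_3)$. Both vanish on $B$ exactly along $u_1=u_3$.
\end{itemize}

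The repair is routine. Factor each mixed cubic into irreducibles over $\mathbb{Q}$ and keep the finitely many distinct factors.

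\begin{itemize}
\item The number of mixed $\cS$ vanishing at a point depends only on which of these factors vanish there.
\item That number is constant along the part of a factor's zero locus avoiding all other factors, and it can only increase where two distinct factors meet.
\item Distinct irreducibles have finitely many common zeros. These can be found by resultants, possibly at irrational algebraic points.
\item So it suffices to evaluate the count at those common zeros lying in the open slice, and at one point of each factor's zero locus that avoids all other factors.
\end{itemize}

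With that correction, verifying that the maximum is $372$ and occurs at $s=r=1$ completes your argument.
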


To investigate the triangulations of $\perm_\ba$, one might turn to the oriented matroid, or chirotope, of the point configuration $S_4 \cdot \ba$. Assume without loss of generality that the coordinates of $\ba$ do not sum to 0. The \emph{chirotope} of $\perm_\ba$ is the function
\begin{align*}\chi:\{\cS \subseteq S_4:|\cS|=4\} &\to \{0,+, -\} \\
\{v_1, v_2, v_3, v_4\} &\mapsto \mathrm{sign}(\det[v_1 \cdot \ba, v_2 \cdot \ba, v_3 \cdot \ba, v_4 \cdot \ba])
\end{align*}
where we list the elements of $\cS$ according to a fixed (arbitrary) total order of $S_4$.

For $S_4$, there are $278$ distinct chirotopes, with the number of $\ba$-simplices ranging from $9,780$ (for the regular permutahedron) to $10,152$. For generic $\ba$ (e.g. those that avoid the hypersurfaces shown in Figure~\ref{fig:n=4}, left), there are $10,152$ $\ba$-simplices, but they may be oriented differently for $\ba$ in different chambers. The reader can find the computational result in \href{https://github.com/TKing321/Permutahedra}{github:Permutahedra}.

By \cite[Corollary 4.1.44]{DLRS10}, if $\perm_\ba$ and $\perm_{\ba'}$ have the same chirotope, the set of $\ba$-triangulations is equal to the set of $\ba'$-triangulations. We have already seen that the chirotope of $\perm_\ba$ is strongly dependent on $\ba$. Example~\ref{ex:dif-number-coplanar} shows that even the number of sets which $\chi$ sends to $0$ depends on $\ba$. 

\begin{rmk}
    The oriented matroid of an orbit permutahedron is not determined (even up to isomorphism) by its Coxeter group and dimension.
\end{rmk}

The subdivision of $B$ according to the chirotope of $\perm_\ba$ is nonlinear and relatively intricate, as \cref{fig:s4realspace} illustrates.

\begin{figure}[h]
    \centering
    \includegraphics[width=0.48\linewidth]{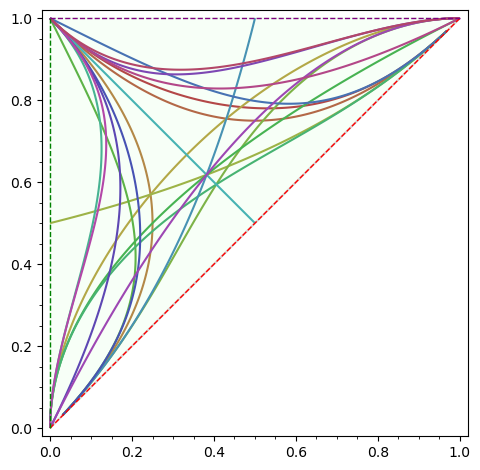}
    \includegraphics[width=0.48\linewidth]{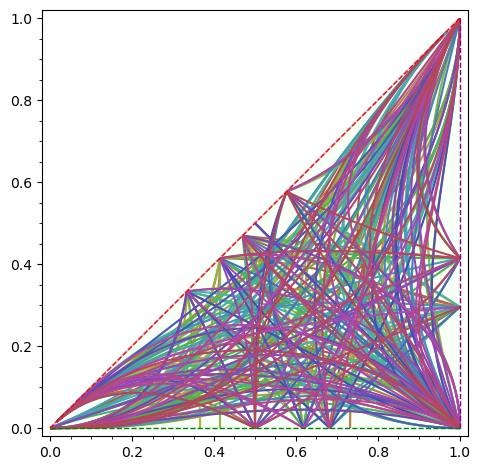}
    \caption{Left: The plane $(0, x, y, 1)$ with the fundamental chamber for $W=S_4$ shaded in green. Right: The plane $(1,x,y)$ with the fundamental chamber of $W=B_3$ in green. Both: The fundamental chamber is subdivided according to the chirotope of $\wperm_\ba$. That is, all $\ba$ in the same region have the same chirotope. When you move to the boundary of a region, some number of simplices collapse to coplanar sets. When you cross one of the curves shown, the chirotope changes sign on some subsets of $S_4$.}
    \label{fig:s4realspace}
\end{figure}

\subsection{The $B_3$ case.}

Let $W=B_3$. We have $B=\{\ba\in R^3\mid a_1>a_2>a_3>0\}$, and $\wperm_\ba$ is a $3$-dimensional polytope in $\bR^3$. Similar to the $S_4$ case, the realization space of the $B_3$-permutahedron is contractible, the orbit realization space is a proper subset of the realization space, and the chirotope of orbit realizations strongly depends on $\ba$.

\begin{prop}\label{prop:b3-classification}
    For subsets of size $4$ in $B_3$, there are $3,600$ persistently coplanar sets, $189,593$ persistent simplices, and $1,387$ subsets which are simplices for some choices of $\ba$ and coplanar for others. 
\end{prop}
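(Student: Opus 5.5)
This is a computational classification, so the plan is to reduce it to a finite symbolic computation, carried out as in the $S_4$ case of \cref{prop:s4-classification}. Fix $B=\{\ba\in\bR^3 \mid a_1>a_2>a_3>0\}$ and realize $B_3$ as the $48$ signed permutation matrices. For each $4$-subset $\cS=\{w_1,\dots,w_4\}\subseteq B_3$, of which there are $\binom{48}{4}=194{,}580$, form the polynomial
\[
f_\cS(\ba)=\det\bigl[\,w_2\ba-w_1\ba,\; w_3\ba-w_1\ba,\; w_4\ba-w_1\ba\,\bigr].
\]
This is a homogeneous cubic in $a_1,a_2,a_3$ with integer coefficients, and $\cS$ is an $\ba$-simplex exactly when $f_\cS(\ba)\neq 0$. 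The trichotomy then reads as follows:
\begin{itemize}
\item $\cS$ is persistently coplanar iff $f_\cS$ vanishes on the open set $B$. Since $B$ is nonempty and open, this holds iff $f_\cS\equiv 0$ as a polynomial.
\item $\cS$ is a persistent simplex iff $f_\cS$ is not identically zero and has no zero in $B$.
\item In every other case, $\cS$ is a simplex for some $\ba$ and coplanar for others.
\end{itemize}
As a sanity check, the three counts must sum to $194{,}580$, and indeed $3{,}600+189{,}593+1{,}387=194{,}580$.

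The first step is routine. Expand each $f_\cS$ exactly over $\mathbb{Z}$ and count the identically zero ones; these should number $3{,}600$. Symmetry can cut the work: $B_3$ acts on $4$-subsets by left multiplication, and $f_{u\cS}=\det(u)\,f_\cS$. So it suffices to treat orbit representatives and weight each by its orbit size.

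The main obstacle is the second step: deciding, for each nonzero cubic $f_\cS$, whether it has a zero in the open cone $B$. By homogeneity I would dehomogenize, for instance by setting $a_3=1$. The question becomes whether the plane cubic $g(x,y)=f_\cS(x,y,1)$ vanishes somewhere in the open region $x>y>1$. My first attempt is a sign test:
\begin{itemize}
\item Evaluate $g$ at many sample points of the region. If two values have opposite signs, connectedness of $B$ forces a zero, so $\cS$ is not persistent.
\item If all samples share one sign, certify that sign. The substitution $a_3=t_3$, $a_2=t_3+t_2$, $a_1=t_3+t_2+t_1$ with $t_i>0$ maps the positive orthant onto $B$. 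If the resulting polynomial in $t_1,t_2,t_3$ has all coefficients of one sign, then $f_\cS$ is nonvanishing on $B$.
\end{itemize}

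Two kinds of residual cases would remain. The first is a cubic that touches zero on $B$ without changing sign, such as a squared factor vanishing inside the cone. The second is a cubic that is positive on $B$ but has mixed-sign coefficients after substitution. For these I would run an exact test: a cylindrical algebraic decomposition, or a real-root check of $g$ along with its partial derivatives on the region (for example via \texttt{HypersurfaceRegions}, as acknowledged). Every $\cS$ that is neither identically zero nor certified nonvanishing is then exhibited with an explicit rational zero in $B$, or an exact algebraic one, giving the $1{,}387$ count. The remaining $189{,}593$ subsets are the persistent simplices.
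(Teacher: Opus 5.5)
Your proposal is correct and essentially matches the paper. The paper gives no written argument: the proposition is the output of a computer enumeration over the $\binom{48}{4}=194{,}580$ subsets, with code in its repository and acknowledged use of \texttt{HypersurfaceRegions} for the chamber decomposition. Your plan does the same computation, sorting each cubic $f_\cS$ into identically zero, nonvanishing on $B$, or vanishing somewhere in $B$. Your exact certificates are, if anything, more rigorous than a numerical region decomposition. These are sign changes at rational points, coefficient positivity after the substitution $a_1-a_2=t_1$, $a_2-a_3=t_2$, $a_3=t_3$, and CAD or a singular-point check to catch zeros where the sign does not change.
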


For $B_3$, there are $101,119$ distinct chirotopes, with the number of simplices ranging from $187,044$ (for the regular $B_3$ permutahedron) to $190,980$.

\section{Topology of orbit realization space} \label{sec:realization}

Now we would like to give some context for our results.
If $P$ is a 2 or 3-dimensional polytope, $\rspace{P}$ modulo affine transformations is contractible \cite{Steinitz1922} \cite{SteinitzRademacher76}. But in general, $\mathcal{R}(P)$ can be extremely poorly behaved. Indeed, N. Mnëv \cite{Mnev88} proved that realization spaces of convex polytopes (indeed, of oriented matroids) 
can be as complicated as \emph{any} semialgebraic set.  
More precisely, for every primary semialgebraic set $V$ defined over the integers, there exists a (sufficiently high-dimensional) convex polytope $P$ whose realization space $\mathcal{R}(P)$ is “stably equivalent” (in homotopy type, and rationally equivalent) to $V$. In particular, questions about solutions to polynomial equations over $\mathbb{R}$ can be encoded as instances of polytope realization problems. Later work by J. Richter-Gebert \cite{RichterGebertZiegler95,RichterGebert96,RichterGebert99} greatly strengthened the universality phenomenon by showing the complicated topology  already occurs in 4-dimensional polytopes.

However, the situation for the orbit realization space is much simpler, as the following result shows.

\begin{thm}\label{thm:realization-space}
    Let $B$ be a choice of fundamental chamber for some finite reflection group $W$ and let $\ospace^W=\{\conv(W\cdot \ba): \ba\in B\}$ be the orbit realization space. Then $\ospace^W$ is homeomorphic to $B$.
\end{thm}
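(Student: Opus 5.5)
The plan is to write down the obvious map $\Psi\colon B \to \ospace^W$, $\Psi(\ba)=\conv(W\cdot \ba)$, and show it is a homeomorphism. Here $\ospace^W$ carries the subspace topology it inherits as a subset of $\rspace{\wperm_\ba}\subseteq V^{W}$. Concretely, a realization is recorded by its labelled vertex tuple $(\bv_w)_{w\in W}$, and \cref{lem:consistent-type} lets us label the vertex of $\conv(W\cdot\ba)$ with normal cone $\overline{B_w}$ by $w$. In these coordinates
\[\Psi(\ba)=(w\cdot \ba)_{w\in W}.\]
This is a linear map $V\to V^W$ restricted to $B$, so it is continuous.

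To get the inverse, I use that the identity element labels the vertex whose normal cone is $\overline{B}$. By the proof of \cref{lem:consistent-type}, that vertex is exactly $\ba$. So the coordinate projection
\[\pi_e\colon V^W\to V,\qquad (\bv_w)_w\mapsto \bv_e,\]
restricted to $\ospace^W$, satisfies $\pi_e\circ\Psi=\mathrm{id}_B$. This shows that $\Psi$ is injective. The map $\pi_e$ is continuous, since it is a coordinate projection.

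It remains to check that $\Psi\circ\pi_e$ is the identity on $\ospace^W$, which amounts to surjectivity. By definition, every element of $\ospace^W$ has the form $\conv(W\cdot\ba)$ with $\ba\in B$. Its $e$-labelled vertex is $\ba$, so applying $\Psi\circ\pi_e$ returns the same labelled tuple. Hence $\Psi$ and $\pi_e|_{\ospace^W}$ are mutually inverse continuous maps, and $\Psi$ is a homeomorphism.

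The main obstacle is being careful about what $\ospace^W$ is as a topological space, in particular whether realizations are taken with labelled vertices or up to relabelling or affine equivalence.

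If labels are allowed to be permuted, I would first observe that the labelling is canonically recoverable from the polytope: the vertex maximizing a fixed $c\in B$ is the $e$-vertex, and \cref{lem:consistent-type} shows this vertex is $\ba$ for every $\ba\in B$. This selection is continuous in the vertex set (locally it is just a coordinate of the tuple), so the argument goes through unchanged.

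If one instead insists on working modulo affine transformations, the statement should be read as embedding $\ospace^W$ in $V^W$ without quotienting. I would remark that $B$ is an open convex cone and hence homeomorphic to an open ball, which gives the advertised contractibility.
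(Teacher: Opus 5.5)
Your proposal is correct and follows essentially the same route as the paper. Both use the continuous linear map $\ba \mapsto (w\cdot\ba)_{w\in W}$ into $V^W$, with the projection onto the identity coordinate as its continuous inverse on $\ospace^W$. Your appeal to \cref{lem:consistent-type}, to justify that the $e$-labelled vertex is $\ba$, makes explicit a labelling convention that the paper simply builds into how it indexes the rows.
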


\begin{proof}

    Fix some arbitrary ordering on $W$ such that the identity is first. We think of $\ospace^W$ as a set of matrices living in $\bR^{n\cdot |W|}$ where each row is a vertex of the polytope. As the vertices are naturally in bijection with elements of $W$, we identify each row with some $w\in W$. As $\ospace^W\subseteq \bR^{n\cdot |W|}$, we endow it with the subspace topology.
    
    Consider the function $f:B\rightarrow\bR^{n\cdot |W|}$ which sends $\ba$ to a matrix where the $w$th row is given by $w\cdot\ba$. This is a linear function in the input variables, so it is continuous. By construction, we have that $\mathrm{im}(f)=\ospace^W$.
    
    For the inverse, note that the map sending a matrix $P\in \bR^{n\cdot |W|}$ to its first row is a projection, so it is continuous. After restricting the domain to $\ospace^W$, the restricted map and $f$ are inverses. As $f$ is a continuous map with continuous inverse, it is a homeomorphism and $\ospace^W\cong B$.
\end{proof}

We note that $\ospace^W$ is clearly in bijection with $B$; the content of the above theorem is that this bijection is topologically well-behaved.

\begin{corollary}\label{cor:contractible}
   The orbit realization space $\ospace^W$ is contractible and thus path-connected.
\end{corollary}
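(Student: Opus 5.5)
By \cref{thm:realization-space}, it is enough to show that the fundamental chamber $B$ is contractible. The homeomorphism $f\colon B\to\ospace^W$ then transports a contraction of $B$ to one of $\ospace^W$. Contractibility is a homotopy invariant, so in particular it is preserved by homeomorphisms.

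The key observation is that $B=\{c\in V : \langle c,\alpha\rangle>0 \text{ for all } \alpha\in\Phi_+\}$ is an intersection of finitely many open half-spaces. It is therefore convex, and it is nonempty because it is a chamber of the $W$-braid arrangement. I would fix a point $\ba_0\in B$ and use the straight-line homotopy
\[
H\colon B\times[0,1]\to B,\qquad H(\ba,t)=(1-t)\ba+t\ba_0 .
\]
This map lands in $B$ because of convexity: each linear functional $\langle\cdot,\alpha\rangle$ is positive at both endpoints, so it is positive along the whole segment. The map is continuous, and it satisfies $H(\cdot,0)=\mathrm{id}_B$ and $H(\cdot,1)\equiv\ba_0$. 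Hence $B$ is contractible.

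For the realization space itself, the contraction is $f\circ H\circ(f^{-1}\times\mathrm{id})$. Concretely, it slides the vertices of each orbit permutahedron $\conv(W\cdot\ba)$ linearly, row by row, to those of $\conv(W\cdot\ba_0)$. This uses the linearity of $f$ already noted in the proof of \cref{thm:realization-space}.

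Path-connectedness follows immediately. Any contractible space is path-connected: a path from $x$ to $y$ goes from $x$ to the contraction point and then back to $y$. Here this is even more direct, since the segment from $\ba$ to $\ba'$ lies in $B$ and its image under $f$ joins the two realizations. I do not expect any real obstacle. The only points to check are that $B$ is nonempty and convex, and both are immediate from its description by strict linear inequalities.
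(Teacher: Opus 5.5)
Your proposal is correct and follows the same route as the paper: show that $B$ is contractible, then transport this along the homeomorphism of \cref{thm:realization-space}. Your explicit straight-line contraction, justified by the convexity of $B$, also fills in what the paper leaves to the phrase ``open cone''; it is convexity, not the cone property alone, that makes $B$ contractible, since the apex $0$ does not lie in $B$.
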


\begin{proof}
    Since $B$ is a open cone, it is contractible and thus path-connected \cite{Hatcher}. As $\ospace^W$ is homeomorphic to $B$, it is also contractible and path-connected.
\end{proof}

\section{Results on persistence of triangulations, simplices, and coplanar sets} \label{persist}

The starting point for our investigation of persistence is the following result. 

\begin{thm}\label{thm:persistent-triangulation}
    An $\ba$-triangulation is a persistent triangulation if and only if it consists of persistent simplices.
\end{thm}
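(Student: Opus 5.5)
The forward direction is immediate from the definitions: a persistent triangulation is by definition a collection of persistent simplices. The content is the converse. Suppose $\sT=\{\cS_i\}_{i=1}^p$ is an $\ba_0$-triangulation for some $\ba_0\in B$ and every $\cS_i$ is a persistent simplex. We must show $\sT$ is an $\ba$-triangulation for every $\ba\in B$. I would use a continuity argument along a path, which exists because $B$ is convex (see \cref{cor:contractible}). Concretely, I would take $\ba(t)=(1-t)\ba_0+t\ba$ and show that the set of $t\in[0,1]$ for which $\sT$ is an $\ba(t)$-triangulation is all of $[0,1]$.

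To make this tractable, I would replace "triangulation" by a combinatorial certificate that only involves signs of determinants and the face structure of $\wperm_{\ba}$. I would invoke the standard characterization (as in \cite{DLRS10}). A collection of full-dimensional simplices on a point configuration spanning a polytope $P$ is a triangulation if and only if the following three conditions hold:
\begin{enumerate}[label=(\roman*)]
\item every codimension-one face (ridge) of some $\cS_i$ either lies in a facet of $P$ and belongs to exactly one simplex, or belongs to exactly two simplices whose remaining vertices lie strictly on opposite sides of the ridge's affine hyperplane;
\item no ridge lying in a facet of $P$ is shared by two simplices;
\item some generic point of $P$ lies in exactly one simplex.
\end{enumerate}
Conditions (i)--(ii) make the union a pseudomanifold whose boundary maps onto $\partial P$. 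Then the number of simplices containing a generic point is locally constant across ridges, so it is constant on $P$, and (iii) forces it to equal $1$.

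Next I would check that each condition is independent of $t$. The set of ridges and which simplices contain them is pure combinatorics on subsets of $W$, so it is fixed. Whether a ridge $R\subset\cS_i$ lies in a facet of $\wperm_{\ba(t)}$ is determined by the labeled face lattice. By \cref{lem:consistent-type}, the normal fan is the $W$-braid arrangement for every $\ba\in B$, so the facets have the same vertex labels for all $t$, and this is also fixed. For "opposite sides": given a ridge $R=\cS_i\cap\cS_j$ with apexes $u\in\cS_i$, $u'\in\cS_j$, the side of $u\cdot\ba(t)$ relative to $\mathrm{aff}(R\cdot\ba(t))$ is the sign of an orientation determinant of $\cS_i\cdot\ba(t)$ (in a fixed vertex order, in the affine span). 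Because $\cS_i$ is a persistent simplex, this determinant never vanishes on $[0,1]$. It is continuous in $t$, so its sign is constant, and likewise for $\cS_j$. Hence "opposite sides" holds for all $t$ once it holds at $t=0$.

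The step I expect to be the main obstacle is condition (iii), together with making the covering-degree argument rigorous. One needs that the covering number is exactly $1$ rather than some other constant, and that the pseudomanifold argument really applies to $\wperm_{\ba(t)}$ itself. My plan is to show that the degree $N(t)$ (the number of simplices containing a generic point) is an integer varying continuously in $t$. One way to see this is $\sum_i\mathrm{vol}(\conv(\cS_i\cdot\ba(t)))=N(t)\,\mathrm{vol}(\wperm_{\ba(t)})$: both volumes are continuous in $t$, and the right-hand volume is positive. So $N(t)$ is constant, and it equals $1$ at $t=0$. Finally, I would double-check that full-dimensionality of each simplex, guaranteed by persistence, is exactly what prevents the pseudomanifold from degenerating along the path.
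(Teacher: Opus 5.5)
Your argument is correct, but it takes a different route from the paper's. The paper does not verify a triangulation criterion directly. It feeds the straight-line family $\ba_\tau=\tau\ba'+(1-\tau)\ba$, with vertex maps $\varphi_\tau(v\cdot\ba)=v\cdot\ba_\tau$, into the black-box Proposition~\ref{prop:path-subdivision} from \cite{DSBW25}. That proposition has three hypotheses:
\begin{itemize}
\item the vertex paths are continuous;
\item the face lattice of $P^W(\ba_\tau)$ is preserved, which follows from Lemma~\ref{lem:consistent-type}, exactly as you use it;
\item the face lattice of each cell is preserved, which is automatic because persistence keeps every cell a $d$-simplex.
\end{itemize}

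You instead replace that proposition by the pseudomanifold characterization of triangulations and check its ingredients by hand. The incidence data and the boundary ridges are fixed combinatorially via Lemma~\ref{lem:consistent-type}. The ``opposite sides'' conditions are sign conditions on orientation determinants; persistence makes these nonvanishing, hence constant along the segment. The degree is pinned at $1$ by the identity $\sum_i \operatorname{vol}(\conv(\cS_i\cdot\ba(t)))=N(t)\operatorname{vol}(P^W(\ba(t)))$.

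What each route buys:
\begin{itemize}
\item Your route is more self-contained modulo the characterization, and it shows exactly where persistence enters: it freezes the signs of the relevant orientation determinants. This matches the paper's remark that equal chirotopes give equal triangulations.
\item The paper's route is shorter and applies verbatim to polytopal subdivisions with non-simplicial cells, which is how it is reused in Section~\ref{sec:matroidal-subdivisions}. Your determinant argument would need modification there.
\end{itemize}

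One detail to make explicit: in type $A$ the affine span of $P^W(\ba)$ moves with $\ba$. So take your orientation determinants of the difference vectors $v\cdot\ba-w\cdot\ba$ with respect to a fixed orientation of the span of the roots, which contains all of them for every $\ba$. Also, your condition (ii) is already implied by (i).
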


By definition, a persistent triangulation consists of persistent simplices, so one direction of the above theorem is clear. The converse direction is the more interesting one. It tells us that we just have to check that a collection of persistent simplices forms a triangulation for some $\wperm_\ba$, and then we know that they form a triangulation for all orbit permutahedra.  The main tool for this result is the following proposition from \cite{DSBW25}:

\begin{proposition}{\cite[Proposition A.1]{DSBW25}}\label{prop:path-subdivision}
    Let $(P_\tau)_{\tau\in[0,1]}$ be a collection of $d$-dimensional polytopes in $\bR^n$. Let $\fJ_1,\dots,\fJ_m$ be subsets of the vertex set of $P_0$. Suppose that for each $\tau\in[0,1]$, there is bijection $\varphi_\tau$ from the vertex set of $P_0$ to the vertex set of $P_\tau$. Define $Q_\tau^i:=\conv(\varphi_\tau(\fJ_i))$. Assume the following conditions hold:
    \begin{itemize}
        \item $\{Q_0^i\}_{i=1}^m$ is a subdivision of $P_0$
        \item The map $\varphi_0$ is the identity map.
        \item For each vertex $v$ of $P_0$, the map $\tau\mapsto \varphi_\tau(v)$ is a continuous function from $[0,1]$ to $\bR^n$.
        \item For every $\tau\in [0,1]$, $\varphi_\tau$ induces a poset isomorphism from the face lattice of $P_0$ to the face lattice of $P_\tau$.
        \item For every $\tau\in[0,1]$ and $1 \le i \leq m$, $\varphi_\tau$ induces a poset isomorphism from the face lattice of $Q_0^i$ to the face lattice of $Q_\tau^i$.
    \end{itemize}
    Then $\{Q_\tau^i\}_{i=1}^m$ is a subdivision of $P_\tau$ for every $\tau\in[0,1]$.
\end{proposition}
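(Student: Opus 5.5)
We would prove this by a continuity and degree argument, showing that the property of being a subdivision cannot be lost along the path. Throughout we identify the vertices of $P_\tau$ and of each $Q_\tau^i$ with those of $P_0$ via $\varphi_\tau$, and we write $\varphi_\tau(\fJ_i)$ for the vertex set of $Q_\tau^i$.

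\textbf{Step 1: transport the combinatorics.} By the last hypothesis each $Q_\tau^i$ stays $d$-dimensional. A facet $F$ of $Q_\tau^i$ is determined by a vertex subset of $\fJ_i$, and this subset is independent of $\tau$. Its affine span is a hyperplane $H_\tau(F)$. Choose $d$ affinely independent vertices of $F$; they stay affinely independent because the face lattice is preserved. Hence $H_\tau(F)$ varies continuously in $\tau$.

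For a vertex $u\in\fJ_i\setminus F$, the side of $H_\tau(F)$ on which $Q_\tau^i$ lies is the sign of a determinant built from $d$ affinely independent vertices of $F$ and $u$. This determinant is continuous in $\tau$ and never zero, so its sign is constant on $[0,1]$.

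At $\tau=0$, since $\{Q_0^i\}$ is a subdivision, every facet $F$ of every $Q_0^i$ is of one of two kinds.
\begin{itemize}
\item It is an \emph{interior} facet: it is also a facet of exactly one other cell $Q_0^j$, lying on the opposite side of $H_0(F)$.
\item It is a \emph{boundary} facet: it lies in a facet $G$ of $P_0$.
\end{itemize}
Both properties are combinatorial, so they persist for every $\tau$.
\begin{itemize}
\item \emph{Interior facets.} "$F$ is a facet of both $Q^i$ and $Q^j$" is a statement about vertex sets. "Opposite sides" is a sign condition, which is constant in $\tau$ by the determinant argument above.
\item \emph{Boundary facets.} Containment of vertex sets in the vertex set of a facet $G$ of $P$ persists by the face-lattice isomorphism for $P_\tau$.
\end{itemize}

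\textbf{Step 2: covering degree.} Fix $\tau$. For $x$ outside the finitely many hyperplanes spanned by facets of the cells $Q_\tau^i$ and of $P_\tau$, let
\[
N_\tau(x)=\#\{i : x\in Q_\tau^i\}.
\]
Moving $x$ across an interior facet of a cell exchanges that cell for its neighbour, by the opposite-sides property, so $N_\tau$ does not change. Every boundary facet lies in $\partial P_\tau$, so within $\operatorname{int}P_\tau$ the value of $N_\tau$ changes only across codimension-2 pieces. The complement of those pieces in the convex set $\operatorname{int}P_\tau$ is connected. Hence $N_\tau\equiv k_\tau$ is constant on $\operatorname{int}P_\tau$ off a null set. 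Every cell is contained in $P_\tau$, since its vertices are vertices of $P_\tau$, so $N_\tau=0$ outside $P_\tau$.

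Integrating $N_\tau$ gives
\[
\sum_i \operatorname{vol}(Q_\tau^i)=k_\tau\operatorname{vol}(P_\tau).
\]
The left side and $\operatorname{vol}(P_\tau)>0$ are continuous in $\tau$, since they are determinants of continuously moving vertices. Hence $k_\tau$ is a continuous integer-valued function on $[0,1]$. Since $k_0=1$, we get $k_\tau=1$ for all $\tau$.

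It follows that the cells cover $P_\tau$: their union is closed and contains a dense subset of $P_\tau$. It also follows that their interiors are pairwise disjoint.

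\textbf{Step 3: face-to-face property (main obstacle).} It remains to show that $Q_\tau^i\cap Q_\tau^j$ is a common face of both cells. We have a cover with disjoint interiors and a facet matching, but this alone does not immediately give the face-to-face condition.

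Our plan is to argue by contradiction. Suppose $Q_\tau^i\cap Q_\tau^j$ is not a face of $Q_\tau^j$. By a standard argument, one of two situations then occurs:
\begin{itemize}
\item some point $y$ in the relative interior of a facet $F$ of $Q_\tau^i$ lies in $Q_\tau^j$ but not in a facet of $Q_\tau^j$ equal to $F$; or
\item we can reduce to such a situation by passing to a face and inducting on dimension, using the fact that faces of cells are again controlled by the lattice isomorphisms.
\end{itemize}
Near a generic such $y$, we compare with the local picture from Step 1. There, the only cells meeting a small ball around $y$ are $Q_\tau^i$ and its unique neighbour across $F$ (or only $Q_\tau^i$, if $F$ is a boundary facet). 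Since $N_\tau\equiv1$, a third cell $Q_\tau^j$ containing $y$ would have to contribute extra covering near $y$, which contradicts $N_\tau\equiv 1$.

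Making this local argument airtight for lower-dimensional intersections is the step we expect to be most delicate. We would handle it by induction on the dimension of the face, restricting the whole configuration to the affine span of a common face.
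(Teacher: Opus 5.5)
Your Steps 1 and 2 are essentially sound. They give a clean route to ``the cells cover $P_\tau$ with pairwise disjoint interiors'' (the paper only imports this proposition from \cite{DSBW25}, so I am judging your argument on its own). The genuine gap is Step 3, which is where the real content lies.

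Your local argument handles only a point $y$ in the relative interior of a facet $F$ of some cell. There $F$ and its twin, or $F$ and $\partial P_\tau$, fill a neighbourhood of $y$. A failure of the face-to-face property need not produce such a point. Already for $d=3$, two cells can meet in a single point that is a vertex of one and lies in the relative interior of an edge of the other, or in a point where two edges cross in their relative interiors. Such an intersection contains no relative-interior point of any facet of either cell. So your dichotomy (``by a standard argument, one of two situations then occurs'') is unproved. The proposed induction by restricting to the affine span of a common face does not help either: in these examples there is no nonempty common face. What controls the local picture near a relative-interior point of a face $G$ is the direction \emph{transverse} to $G$, not the span of $G$.

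The missing ingredient is a local version of your Step 2 at every face.
\begin{enumerate}
\item Let $G$ be a face of some cell with vertex set $Y$. Let $\mathcal{C}_Y$ be the set of $l$ such that $Y$ is the vertex set of a face of $Q^l_0$. By the lattice isomorphisms, $\mathcal{C}_Y$ is independent of $\tau$.
\item If $X\supseteq Y$ is a facet of $Q^l$ with $l\in\mathcal{C}_Y$, then $X$ is either a boundary facet or its twin cell again lies in $\mathcal{C}_Y$, since $Y$ is a face of $X$.
\item Fix $y\in\operatorname{relint} G_\tau$. Take the tangent cones at $y$ of the cells $Q^l_\tau$, $l\in\mathcal{C}_Y$, modulo $\operatorname{lin}(G_\tau-y)$. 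These are pointed cones in dimension $c=\operatorname{codim} G$. Their facets are matched in pairs with opposite orientation, except those lying on the boundary of the tangent cone of $P_\tau$.
\item Rerun your crossing count in this quotient. For $c\ge 2$, the interior of the tangent cone of $P_\tau$ minus codimension-two pieces is connected; $c=1$ is your facet case. Together with the disjointness of interiors from Step 2, this shows the cones cover the tangent cone of $P_\tau$ with multiplicity one.
\item Hence the cells in $\mathcal{C}_Y$ cover a neighbourhood of $y$ in $P_\tau$. So every cell containing $y$ lies in $\mathcal{C}_Y$, i.e.\ has $G_\tau$ as a face.
\item Face-to-faceness follows. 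Take $y\in\operatorname{relint}(Q^i_\tau\cap Q^j_\tau)$. The minimal faces of $Q^i_\tau$ and of $Q^j_\tau$ containing $y$ are then faces of both cells, so they coincide with a common face $G$. Any $z\in Q^i_\tau\cap Q^j_\tau$ lies in $G$, because $y$ is in the relative interior of a segment through $z$ inside $Q^i_\tau$.
\end{enumerate}

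A smaller point on Step 1: $d$ vertices of $F$ that are affinely independent at $\tau=0$ need not stay so under a face-lattice isomorphism. For example, the four ``equatorial'' vertices of an octahedral facet can become coplanar. So continuity of $H_\tau(F)$ and constancy of the side must be argued locally in $\tau$, or via the outward unit normal. Also, since $d<n$ is allowed, everything should be done inside $\operatorname{aff}(P_\tau)$.
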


\begin{proof}[Proof of Theorem~\ref{thm:persistent-triangulation}]

Choose $\ba,\ba'\in B$. We will first define a family of polytopes interpolating between $\wperm_{\ba}$ and $\wperm_{\ba'}$. For $\tau \in [0,1]$, let $\ba_\tau :=\tau\ba'+(1-\tau)\ba$. Since $B$ is an open cone, $\ba_\tau\in B$ and we define $P_\tau:=\wperm_{\ba_\tau}$.

Now we define the maps $\varphi_\tau$. For $\bv=v \cdot \ba$ a vertex of $\wperm_\ba$, we let $\bv':=v \cdot \ba'$ be the corresponding vertex of $\wperm_{\ba'}$. Then we define $\varphi_\tau(\bv):=\tau \bv'+(1-\tau)\bv$; in words, $\varphi_\tau(\bv)$ moves along the line segment between $\bv$ and $\bv'$. We have that 
\[\varphi_\tau(\bv)=\tau \bv'+(1-\tau)\bv=\tau (v\cdot\ba')+(1-\tau)(v\cdot\ba)=v\cdot(\tau\ba'+(1-\tau)\ba)=v\cdot\ba_\tau,\] 
so $\varphi_\tau(\bv)$ is a vertex of $P_\tau$. Thus $\varphi_\tau$ is a bijection from $P_0=\wperm_\ba$ to $P_\tau$.

For $\tau=0$, $\varphi_\tau(\bv)=0\bv'+(1-0)\bv=\bv$ which is the identity map. As $\varphi_\tau(\bv):[0,1] \to \bR^n$ is just a parametrization of a line segment, it is continuous.
As $\varphi_\tau$ sends vertices of $P_0$ to vertices of $P_\tau$, it induces a map from the maximal cones of the normal fan $\mathcal{N}(P_0)$ to those of $\mathcal{N}(P_\tau)$, sending the cone $N_{\bv}$ to $N_{\varphi_\tau(\bv)}$. As $\bv=v\cdot \ba$ and $\varphi_\tau(\bv)=v\cdot\ba_\tau$, these maximal cones are the same by \cref{lem:consistent-type}. Since each cone of the normal fan can be written as an intersection of various $N_{\bv}$, $\varphi_\tau$ in fact induces a map from the cones of $\mathcal{N}(P_0)$ to those of $\mathcal{N}(P_\tau)$. This map is also the identity map, and so preserves containment of cones. Since the lattice of cones of $\mathcal{N}(P)$ is dual to the face lattice of $P$, we have that $\varphi_\tau$ induces a poset isomorphism from the face lattice of $P_0$ to the face lattice of $P_\tau$.

Let $\{\cS^i\}$ be an $\ba$-triangulation where $\cS^i$ is a persistent simplex for all $i$. By assumption, $\{Q_0^i=\conv(\cS^i\cdot\ba)\}$ is a triangulation of $P_0=\wperm_\ba$. As $\cS^i$ is a persistent simplex, $Q_\tau^i=\conv(\cS^i\cdot \ba_\tau)$ is a simplex. Since the face lattice of a simplex is the subset lattice, any bijection on the vertices will induce a poset isomorphism.

As we satisfy the conditions of Proposition~\ref{prop:path-subdivision}, $\{Q_\tau^i=\conv(\cS^i\cdot \ba_\tau)\}$ is a subdivision of $P_\tau=\wperm_{\ba_\tau}$ for every $\tau\in[0,1]$. In particular, $\{\cS^i\}$ is an $\ba'$-triangulation for every $\ba'\in B$. Thus $\{\cS^i\}$ is a persistent triangulation.
\end{proof}

Theorem~\ref{thm:persistent-triangulation} shows that the first step to understanding persistent triangulations is to understand persistent simplices. While we do not have a complete characterization, we have the following sufficient conditions. The first is for type $A$.

\begin{prop}\label[prop]{prop:bruhat-graph-persistent-simplices}
    Suppose $\cS \subseteq S_n$ has size $n$. Define 
    \[T:=\{\{i, j\}: \text{ for some }v, w \in \cS,~ vw^{-1}=(i~j))\}.\]
    and consider $T$ as a graph on $[n]$. If $T$ is connected, then $\cS$ is a persistent simplex.
\end{prop}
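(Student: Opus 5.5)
The plan is to show directly that, for every $\ba \in B$, the $n$ points $\cS \cdot \ba$ are affinely independent. Since $\wperm_\ba$ lies in the hyperplane $\{x : \sum_k x_k = \sum_k a_k\}$, the relevant dimension is $d = n-1$, and $|\cS| = n = d+1$. It therefore suffices to show that the difference vectors $v\cdot\ba - w\cdot\ba$, for $v,w\in\cS$, span the $(n-1)$-dimensional space $\{x\in\bR^n : \sum_k x_k = 0\}$. This uses no earlier machinery beyond the definitions; the point is that transpositions produce root directions.

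First I fix the action convention $(w\cdot\ba)_k = a_{w^{-1}(k)}$, so that left multiplication by $(i~j)$ swaps the $i$-th and $j$-th coordinates of $w\cdot\ba$. Suppose $v,w\in\cS$ with $vw^{-1} = (i~j)$, so $v = (i~j)w$. Then $v\cdot\ba = (i~j)\cdot(w\cdot\ba)$. Writing $x = w\cdot\ba$, we get
\[
v\cdot\ba - w\cdot\ba = (x_j - x_i)(e_i - e_j).
\]
Here $x_i$ and $x_j$ are two distinct coordinates of $\ba$. Since $\ba\in B$ has pairwise distinct coordinates, this is a \emph{nonzero} multiple of the root $e_i - e_j$. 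The scalar depends on $\ba$, but its nonvanishing holds for every $\ba\in B$, and this is exactly what persistence requires.

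Next, since $T$ is a connected graph on $[n]$, it contains a spanning tree with $n-1$ edges $\{i_1,j_1\},\dots,\{i_{n-1},j_{n-1}\}$. The standard fact that the vectors $e_i - e_j$ indexed by the edges of a spanning tree of $[n]$ form a basis of the sum-zero hyperplane shows that the corresponding difference vectors span it. Hence the affine span of $\cS\cdot\ba$ has dimension $n-1$. Since $\cS\cdot\ba$ consists of at most $n$ points, they must be exactly $n$ distinct, affinely independent points, so $\conv(\cS\cdot\ba)$ is an $(n-1)$-simplex. As $\ba\in B$ was arbitrary, $\cS$ is a persistent simplex.

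The only delicate step is the bookkeeping of the action convention. If the paper's convention instead has $(i~j)$ acting on the left by swapping the values $i,j$ rather than the positions, then $vw^{-1} = (i~j)$ translates into a swap of positions $w^{-1}(i), w^{-1}(j)$. In that case the argument has to be rephrased, and I would check that the definition of $T$ is indeed compatible with the position-swapping convention used above. Under the stated convention the argument goes through verbatim; everything else is routine linear algebra.
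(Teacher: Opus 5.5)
Your proof is correct and follows the same route as the paper. Each edge $\{i,j\}$ of $T$ gives a difference vector $v\cdot\ba - w\cdot\ba$ parallel to $e_i-e_j$; a spanning tree of $T$ gives $n-1$ linearly independent such vectors, so the $n$ points are affinely independent for every $\ba\in B$. Your two additions only make explicit what the paper leaves implicit: the check that the scalar $x_j-x_i$ is nonzero because $\ba$ has distinct coordinates, and the convention $(w\cdot\ba)_k=a_{w^{-1}(k)}$, which is the left reflection action $v\cdot\ba=s_\alpha(w\cdot\ba)$ the paper's proof relies on.
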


\begin{proof}
    If $v = s_{\alpha} w$, then the line segment $v \cdot \ba - w \cdot \ba$ is parallel to $e_i-e_j$, regardless of the choice of point $\ba \in B$. Recall that the matroid of the type $A$ positive roots $\{e_i-e_j\}_{i<j}$ is the graphical matroid of the complete graph $K_n$. Since $T$ is a connected subgraph of $K_n$, it contains a spanning tree $\sT$, consisting of $n-1$ edges. Because these edges form a tree, the $n-1$ corresponding root vectors are linearly independent. Since we have $n-1$ linearly independent difference vectors, the $n$ points $\cS \cdot \ba$ are affinely independent and their convex hull is a simplex of dimension $n-1$.
\end{proof}

We can extend this class of persistent simplices to classical types $B/C$ and $D$ by strategically adding a vertex. We use the standard realizations of the classical type root systems, from \cite[2.10]{Humphreys90}. Recall that in this realization, all root systems contain the type $A$ roots $\{e_i - e_j\}_{i \neq j}$.

\begin{prop}\label{prop:bruhat-graph-persistent-simplices-BC}
    Let $W$ be type $B_n$ or $D_n$. Suppose $\cS \subseteq W$ has size $n+1$. Define 
    \[T:=\{\{i, j\}: \text{ for some }v, w \in \cS,~ vw^{-1}=s_{\alpha} \text{ where } \alpha=e_i - e_j\}.\]
    and consider $T$ as a graph on $[n]$. If $T$ is connected and there exists $v,w \in \cS$ such that $v w^{-1} = s_{\beta}$ where $\beta \in \Phi$ is \emph{not} a Type A root, then $\cS$ is a persistent simplex.
\end{prop}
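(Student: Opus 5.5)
We mirror the proof of \cref{prop:bruhat-graph-persistent-simplices}. In types $B_n$ and $D_n$ the space $V=\bR^n$ has dimension $n$. The aim is therefore to produce $n$ linearly independent difference vectors among the points $\cS\cdot\ba$, using only directions that do not depend on $\ba$. Then the $n+1$ points are affinely independent for every $\ba\in B$.

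\textbf{Step 1: difference directions are fixed.} If $vw^{-1}=s_\gamma$, then $v\cdot\ba = s_\gamma(w\cdot\ba) = w\cdot\ba - 2\frac{\langle w\cdot\ba,\gamma\rangle}{\langle\gamma,\gamma\rangle}\gamma$. So $v\cdot\ba - w\cdot\ba$ is a scalar multiple of $\gamma$. We also need this scalar to be nonzero. It vanishes only if $w\cdot\ba\in H_\gamma$, which would mean $\ba\in H_{w^{-1}\gamma}$. Since $\ba\in B$ lies off every reflecting hyperplane, this cannot happen. Hence for every $\ba\in B$ each such difference is a nonzero multiple of $\gamma$.

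\textbf{Step 2: the type $A$ part.} Choose a spanning tree $\sT$ of the connected graph $T$ on $[n]$; it has $n-1$ edges. For each edge $\{i,j\}$, Step~1 gives a difference vector parallel to $e_i-e_j$. As in the type $A$ argument, via the graphical matroid of $K_n$, these $n-1$ vectors are linearly independent and span the hyperplane $U=\{x:\sum x_i=0\}$.

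\textbf{Step 3: the extra root.} Take $v,w\in\cS$ with $vw^{-1}=s_\beta$, where $\beta$ is not a type $A$ root. By Step~1 this gives a difference vector parallel to $\beta$. In the standard realization the non-type-$A$ roots are $\pm(e_i+e_j)$ with $i\ne j$, and also $\pm e_i$ and $\pm 2e_i$ in types $B$ and $C$. Each of these has nonzero coordinate sum, so $\beta\notin U$. Together with Step~2 we obtain $n$ linearly independent vectors, all of them differences of points of $\cS\cdot\ba$. Thus $\cS\cdot\ba$ affinely spans $\bR^n$, and its $n+1$ points form an $n$-simplex for every $\ba\in B$.

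The only point needing care is Step~1: one must check that genericity of $\ba$ keeps every difference nonzero, so that no tree edge collapses for some $\ba$. Step~3 is just the observation that the coordinate sum separates type $A$ roots from all the others.
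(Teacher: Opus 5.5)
Your proof is correct and follows the same approach as the paper: a spanning tree of $T$ gives $n-1$ independent type $A$ root directions, and adding the direction $\beta$ gives $n$ linearly independent difference vectors of $\cS\cdot\ba$ for every $\ba\in B$. You also supply two details the paper leaves implicit. First, each difference is a nonzero multiple of its root, because $\ba$ avoids every reflecting hyperplane. Second, $\beta$ lies outside the span $\{x:\sum x_i=0\}$ of the type $A$ roots, since its coordinate sum is nonzero.
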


\begin{proof}
    As in the proof of \cref{prop:bruhat-graph-persistent-simplices}, these conditions guarantee that there are $n$ linearly independent distance vectors for the points $\cS\cdot \ba$ for any $\ba$. Of these, $n-1$ vectors will be type $A$ roots (this is guaranteed by the condition on $T$) and one will be the root $\beta$.
\end{proof}

The converse of Proposition~\ref{prop:bruhat-graph-persistent-simplices} does not hold in general---some persistent simplices contain relatively few elements that differ by a reflection. For instance, in $S_4$ there are $3396$ persistent simplices of the type described in Proposition~\ref{prop:bruhat-graph-persistent-simplices}, which is only about $40$ percent of the $8766$ persistent simplices.

We can similarly give a partial description of the persistently coplanar sets in type $A$. For $\cS \subseteq S_n$, let $G_{\cS}$ be the graph with vertices $\cS$ and edge set 
\[\{\{v, w\}: v, w \in \cS \text{ and } vw^{-1} \text{ is a transposition}\}.\] 

\begin{prop}\label{prop:coplanar-sufficient-cond}
    Suppose $\cS \subset S_n$ has size $n$. Suppose there is a forest $F \subseteq G_{\cS}$ such that the graph $T_F$ on $[n]$ with edges 
    \[\{\{i, j\}: vw^{-1}=(i~j)~\text{for some edge }\{v,w\} \in F\}\]
    is not a forest, then $\cS$ is persistently coplanar.
\end{prop}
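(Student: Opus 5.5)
The plan is a contradiction argument based on linear algebra. Fix an arbitrary $\ba \in B$, and suppose for contradiction that $\cS$ is an $\ba$-simplex. Since $|\cS| = n$ and $\wperm_\ba$ is $(n-1)$-dimensional, this means the $n$ points $\cS \cdot \ba$ are affinely independent. I will produce a linear dependence among certain difference vectors that affine independence forbids. Since $\ba$ is arbitrary, this shows $\cS$ is $\ba$-coplanar for every $\ba \in B$, i.e. persistently coplanar.

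First, I identify the edge vectors, as in the proof of \cref{prop:bruhat-graph-persistent-simplices}. Let $\{v,w\}$ be an edge of $F$ with $vw^{-1} = (i~j)$. Then $v\cdot\ba = (i~j)\cdot(w\cdot\ba)$ is obtained from $w \cdot \ba$ by swapping two coordinates. Hence $v\cdot\ba - w\cdot\ba = c\,(e_i - e_j)$, where $c$ is, up to sign, a difference of two distinct coordinates of $\ba$. Because $\ba \in B$ has pairwise distinct coordinates, $c \neq 0$. So each edge of $F$ contributes a nonzero multiple of the root $e_i - e_j$ labelling the corresponding edge of $T_F$.

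Second, I use the forest structure. For affinely independent points $x_v$, the vectors $x_v - x_w$ over the edges $\{v,w\}$ of a forest on those points are linearly independent. To see this, root each tree component at a vertex and choose a base point $x_{v_0}$. Each edge difference is then a signed sum of vectors $x_u - x_{v_0}$, the edge-to-vertex incidence is unitriangular (each non-root vertex matched to its parent edge), and the vectors $x_u - x_{v_0}$ are independent. So under our assumption, the $|F|$ vectors $c_{\{v,w\}}(e_i-e_j)$ are linearly independent.

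Third, I use the cycle in $T_F$ to get a contradiction. If two distinct edges of $F$ give the same transposition $(i~j)$, their vectors are parallel, hence dependent. Otherwise $T_F$ contains a genuine cycle $i_1 i_2 \cdots i_k i_1$. The roots $e_{i_1}-e_{i_2}, \dots, e_{i_k}-e_{i_1}$ sum to zero; this is the graphic matroid of $K_n$ again. Rescaling by the nonzero constants $c$ gives a nontrivial dependence among the edge vectors of $F$. Either way we contradict the second step, so $\cS \cdot \ba$ is affinely dependent.

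The only step needing care is the second, together with the bookkeeping that edges of $F$ correspond to edges of $T_F$ possibly with repetition. "Not a forest" should be read so that repeated labels count as cycles, and that case is handled by parallelism. I expect no real obstacle beyond stating the forest independence lemma cleanly.
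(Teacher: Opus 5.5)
Your proposal is correct and follows essentially the same route as the paper: a forest of edge differences is linearly independent if $\cS\cdot\ba$ is a simplex, each such difference is parallel to a root $e_i-e_j$, and a cycle in $T_F$ forces a linear dependence. You also spell out details the paper leaves implicit: that the scaling constants are nonzero, a proof of the forest-independence lemma, and the case where two edges of $F$ carry the same transposition.
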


\begin{proof}
    Since $F$ is a forest on $G_\cS$, if $\cS\cdot \ba$ forms a simplex, then the set of difference vectors $\{v\cdot \ba-w\cdot \ba\mid \{v,w\}\in F\}$ is linearly independent. Again, these line segments are parallel to $e_i-e_j$, regardless of the choice of point $\ba\in B$. By construction, these root directions are all of the edges of $T_F$. If $T_F$ is not a forest, then the roots $\{e_i - e_j: \{i, j\} \in T_F\}$ are linearly dependent, so $\cS$ is a persistently coplanar set.
\end{proof}

One class of triangulations that is known to depend only on the combinatorial type of a polytope is the \emph{lexicographic} triangulations, that is, those that arise from certain ``pulling" and ``placing" operations on each vertex (see \cite[Section 4.3.3]{DLRS10} for more details). A natural question to ask is if these are the only persistent triangulations.

\begin{prop}
    There exist non-lexicographic persistent triangulations.
\end{prop}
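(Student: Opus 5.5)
The plan is to exhibit an explicit example, most naturally for $W=S_4$ (the smallest case with nontrivial $3$-dimensional geometry), and to check it in two stages. First I will find a triangulation of $\wperm_\ba$ built from persistent simplices. Then I will show that this triangulation is not lexicographic for any choice of $\ba$.

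\textbf{Producing the triangulation.} Rather than search blindly, I will take a triangulation coming from a different mechanism, such as one of the persistent triangulations of \cite[Theorem 1.5]{DSBW25}. Alternatively, I will build one from simplices certified by \cref{prop:bruhat-graph-persistent-simplices}, that is, sets of $4$ permutations whose transposition graph $T$ is connected. These simplices are persistent because their edge directions are fixed roots $e_i-e_j$, independent of $\ba$. I will check at the single point $\ba=(1,2,3,4)$ that the chosen simplices form an $\ba$-triangulation. The check has three parts: the volumes sum to $\operatorname{vol}(\wperm_\ba)$, the simplices pairwise intersect properly, and every interior triangle is shared by exactly two simplices. \cref{thm:persistent-triangulation} then upgrades this to a persistent triangulation with no further work.

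\textbf{Ruling out lexicographic.} Here I will use the structure of pulling and placing triangulations from \cite[Section 4.3.3]{DLRS10}. In a lexicographic triangulation, the first vertex in the order is either pulled or placed. If it is pulled, every maximal simplex contains that vertex. If it is placed, it lies only in simplices that are cones from it over the faces of the remaining configuration visible from it. Applying this recursively gives checkable combinatorial invariants. The most useful is: \emph{some vertex lies in all maximal simplices, or some vertex has star equal to the cone over its visible region of $\conv$ of the other points.} The restriction of the triangulation after deleting that vertex must again have this property, and so on. So it suffices to show that at the top level no vertex of our triangulation is contained in all simplices, and no vertex $v$ has its star equal to the placing cone. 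For the second condition, I expect to show that each vertex star contains a simplex not of the form $v *$ (a visible facet of $\conv(W\cdot\ba\setminus\{v\})$).

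The obstacle is that the placing condition depends on the geometry of $\conv(W\cdot \ba \setminus \{v\})$, which could vary with $\ba$. To handle this, I will pick the triangulation so that the failure is combinatorial. For example, I will look for a vertex-star simplex using a vertex that is not adjacent to $v$ in $\wperm_\ba$, and adjacency does not depend on $\ba$ by \cref{lem:consistent-type}. A cleaner option is to choose a triangulation with no interior-only structure forced by the first vertex, one that is symmetric under a subgroup of $W$ acting transitively on vertices. Then no vertex can play the special first role, since pulling or placing one vertex breaks symmetry. Even if this symmetric approach does not work, the computer verification at one $\ba$ combined with the $\ba$-independent adjacency argument should suffice.
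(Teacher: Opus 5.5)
Your strategy is the paper's: an explicit $S_4$ example, checked once to be an $\ba$-triangulation at $\ba=(1,2,3,4)$, upgraded by Theorem~\ref{thm:persistent-triangulation}, and shown non-lexicographic by examining the most significant vertex of a hypothetical pulling/placing order. But for an existence statement the witness \emph{is} the proof, and you never produce one. Neither suggested source is guaranteed to work.
\begin{itemize}
\item You give no reason that the triangulations of \cite[Theorem 1.5]{DSBW25} are not themselves lexicographic.
\item It is not clear that $P^W(\ba)$ can be triangulated using only simplices certified by Proposition~\ref{prop:bruhat-graph-persistent-simplices}. The paper's own example needs simplices such as $\{(2314),(2341),(2413),(3421)\}$, in which only two pairs differ by a transposition. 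The paper certifies persistence instead by checking that each volume polynomial is nonvanishing on $B$.
\end{itemize}
In either case nothing in the construction bears on lexicographicity, so the decisive check is left undone. The symmetric fallback cannot work. $W$ acts freely on the vertices $W\cdot\ba$, so the only subgroup acting transitively is $W$ itself. No triangulation of $P^W(\ba)$ on its vertices is $W$-invariant: the cell containing the $W$-fixed barycenter in its relative interior would be $W$-stable, and hence would use all $|W|>d+1$ vertices.

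The non-lexicographic test also needs an ingredient you only gesture at. Your criterion says that a simplex in the star of $v$ containing a non-neighbor of $v$ rules out placing $v$. This presupposes that the facets of $\conv((W\setminus\{v\})\cdot\ba)$ visible from $v\cdot\ba$ use only neighbors of $v$. The paper gets this, in a sharper form, from simplicity:
\begin{itemize}
\item By Lemma~\ref{lem:consistent-type} the normal fan is the simplicial $W$-braid arrangement, so $P^W(\ba)$ is simple for every $\ba\in B$.
\item The tangent cone at $v\cdot\ba$ is spanned by the $d$ edges to the neighbors $N(v)$.
\item Every other vertex lies strictly beyond the hyperplane through $N(v)\cdot\ba$; otherwise it would lie in the simplex $\conv((\{v\}\cup N(v))\cdot\ba)$.
\end{itemize}
Hence the visible region is the single simplex $\conv(N(v)\cdot\ba)$, and a vertex placed last lies in exactly one maximal simplex, namely $\{v\}\cup N(v)$. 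The whole verification then reduces to an $\ba$-independent count, which the paper carries out on its explicit list of $36$ simplices. No vertex lies in every simplex, which rules out pulling, and no vertex lies in exactly one simplex, which rules out placing. No recursion is required.
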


\begin{proof} 
    Consider the $\ba$-triangulation of an $S_4$ permutahedron, $P(1,2,3,4)$, into the following simplices:
\begin{multicols}{2}
\begin{enumerate}[label=$S_{\arabic*}$, itemsep=1pt]
    \item $\{(3124), (3142), (3241), (4132)\}$
    \item $\{(3124), (3214), (3241), (4132)\}$
    \item $\{(3124), (3214), (4123), (4132)\}$
    \item $\{(3214), (3241), (4132), (4213)\}$
    \item $\{(3214), (4123), (4132), (4213)\}$
    \item $\{(3241), (4132), (4213), (4231)\}$
    \item $\{(1234), (1243), (1324), (2314)\}$
    \item $\{(1234), (1243), (2134), (3214)\}$
    \item $\{(1234), (1243), (2314), (3214)\}$
    \item $\{(1243), (1324), (1342), (2341)\}$
    \item $\{(1243), (1324), (2314), (2341)\}$
    \item $\{(1243), (2134), (2143), (3214)\}$
    \item $\{(1243), (2143), (3214), (3241)\}$
    \item $\{(1243), (2314), (2341), (3241)\}$
    \item $\{(1243), (2314), (3214), (3241)\}$
    \item $\{(2134), (2143), (3124), (3214)\}$
    \item $\{(2143), (3124), (3142), (3241)\}$
    \item $\{(2143), (3124), (3214), (3241)\}$
    \item $\{(1324), (1342), (1432), (2341)\}$
    \item $\{(1324), (1423), (1432), (2314)\}$
    \item $\{(1324), (1432), (2314), (2341)\}$
    \item $\{(1423), (1432), (2314), (2413)\}$
    \item $\{(1432), (2314), (2341), (2413)\}$
    \item $\{(1432), (2341), (2413), (2431)\}$
    \item $\{(2314), (2341), (2413), (3421)\}$
    \item $\{(2314), (2341), (3241), (3421)\}$
    \item $\{(2314), (2413), (3412), (3421)\}$
    \item $\{(2314), (3214), (3241), (3421)\}$
    \item $\{(2314), (3214), (3412), (3421)\}$
    \item $\{(2341), (2413), (2431), (3421)\}$
    \item $\{(3214), (3241), (3421), (4321)\}$
    \item $\{(3214), (3241), (4213), (4321)\}$
    \item $\{(3214), (3412), (3421), (4312)\}$
    \item $\{(3214), (3421), (4312), (4321)\}$
    \item $\{(3214), (4213), (4312), (4321)\}$
    \item $\{(3241), (4213), (4231), (4321)\}$
\end{enumerate}
\end{multicols}

One can verify using e.g. Sage that this is indeed a triangulation. One can show that for each of these sets $\cS$, the volume of $\conv(\cS \cdot \ba)$ (which is a polynomial in the coordinates of $\ba$) is nonzero on $B$, and so they are persistent simplices. Thus by Theorem~\ref{thm:persistent-triangulation} this is a persistent triangulation. Suppose this is a lexicographic triangulation. For a lexicographic triangulation, there must be some vertex which is last in the ordering and which is either pulled or placed. If the vertex is pulled, then it must be contained in every simplex of the triangulation. A quick check shows that no vertex is contained in every simplex and so the last vertex can not be pulled. Since $P(1,2,3,4)$ is a simple polytope, if the last vertex is placed it will be contained in exactly one simplex. As no vertex is contained in exactly one simplex, the last vertex can not be pulled. This is a contradiction.
\end{proof}

\subsection{Generic permutahedra may have different triangulations}

The above investigation concerns properties which hold for all choices of base point. A natural relaxation of these questions is to instead consider only a dense subsets of base points. Along these lines, we characterize $\ba$-simplices for ``most" $\ba$.

\begin{proposition}\label{prop:generic-simplices}
For generic $\ba \in B$, then the set of $\ba$-simplices is equal to
\[\{\cS \subseteq W: |\cS|=d+1, \cS\text{ not persistently coplanar}\}\]
and in particular does not depend on $\ba$.
\end{proposition}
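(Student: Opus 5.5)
The plan is to encode affine independence of $\cS\cdot\ba$ as the non-vanishing of a polynomial in the coordinates of $\ba$, and then use that a nonzero polynomial vanishes only on a nowhere dense set. Here $d=\dim \wperm_\ba$, which does not depend on $\ba$ by \cref{lem:consistent-type}.

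Fix $\cS=\{v_0,\dots,v_d\}\subseteq W$. The points $\cS\cdot\ba$ are affinely independent exactly when the $d$ vectors $v_i\cdot\ba-v_0\cdot\ba$, $1\le i\le d$, are linearly independent. Since each $w\in W$ acts linearly, every entry of the matrix $M_\cS(\ba)$ with these columns is a linear form in $\ba$. So $\cS$ is an $\ba$-simplex if and only if some $d\times d$ minor of $M_\cS(\ba)$ is nonzero. Equivalently, the polynomial $f_\cS(\ba):=\sum (\text{minors})^2$ is nonzero at $\ba$. (Alternatively, project to the $d$-dimensional affine span of $W\cdot\ba$ and take a single determinant.) In short, $\cS$ is $\ba$-coplanar exactly when $\ba\in Z(f_\cS)$.

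First, suppose $\cS$ is persistently coplanar. Then by definition it is never an $\ba$-simplex, so it is excluded for every $\ba$.

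Now suppose $\cS$ is not persistently coplanar. Then there is some $\ba_0\in B$ with $f_\cS(\ba_0)\neq 0$, so $f_\cS$ is a nonzero polynomial. Its zero set $Z(f_\cS)$ is therefore a proper algebraic hypersurface: it is closed, has measure zero, and is nowhere dense in $V$. Let
\[
U := B\setminus \bigcup_{\cS} Z(f_\cS),
\]
where the union runs over the finitely many $\cS$ of size $d+1$ that are not persistently coplanar. This $U$ is open and dense in $B$, since $B$ is open and we remove a finite union of proper hypersurfaces. For every $\ba\in U$, the $\ba$-simplices are exactly the sets that are not persistently coplanar. Hence the set of $\ba$-simplices is the stated one and is independent of $\ba$. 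This is the precise meaning of ``generic'': it holds off a finite union of hypersurfaces, for example those shown in \cref{fig:s4realspace}.

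The only delicate step is making sure $f_\cS$ is a genuine polynomial identity, not just a function defined on $B$. This holds because the action is linear, so the minors are polynomials on all of $V$. It is also needed that a polynomial which is nonzero at one point of $B$ is not identically zero on the open set $B$, which is immediate. Everything else is routine.
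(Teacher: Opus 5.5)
Your proof is correct and takes essentially the same approach as the paper: write coplanarity of $\cS\cdot\ba$ as the vanishing of a polynomial in $\ba$, and note that if this polynomial is nonzero somewhere on $B$, its zero set there is nowhere dense. You are a bit more careful than the paper in two places: you use a genuine polynomial (the sum of squared $d\times d$ minors, rather than the unsigned volume), and you explicitly remove the finitely many zero sets to get one open dense set of $\ba$ on which the set of $\ba$-simplices is constant.
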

\begin{proof}
    The volume of $\conv(\cS \cdot \ba)$ is a polynomial in the coordinates of $\ba$. This means that the set of $\ba$ such that $\cS$ is \emph{not} an $\ba$-simplex is Zariski-closed. Thus, if $\cS$ is not persistently coplanar, then $\cS$ is an $\ba$-simplex for $\ba$ in a Euclidean-dense subset of $B$. 
\end{proof}

Proposition~\ref{prop:generic-simplices} may inspire hope that the set of $\ba$-triangulations does not depend on $\ba$ as long as $\ba$ is sufficiently generic; that is, if $\{\cS_i\}$ is an $\ba$-triangulation for one generic $\ba$, it will be an $\ba$-triangulation for a (Euclidean) dense subset of $B$. However, this is far from being true, as we now explain. The key point is that, though the matroid of $\wperm_{\ba}$ is constant for generic $\ba$, the oriented matroid is not, so triangulations do not ``generically persist."

\begin{figure}[htb]
    \centering
    \begin{subfigure}[b]{0.46\textwidth}
        \centering
        \includegraphics[height=0.70\linewidth]{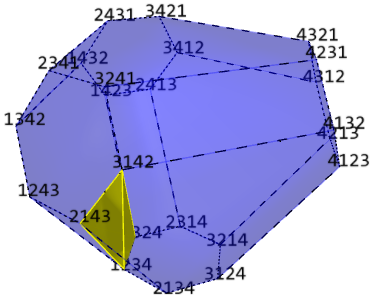}
        \caption{$(1,2,3,6)$}
    \end{subfigure}
    \begin{subfigure}[b]{0.46\textwidth}
        \centering
        \includegraphics[height=0.70\linewidth]{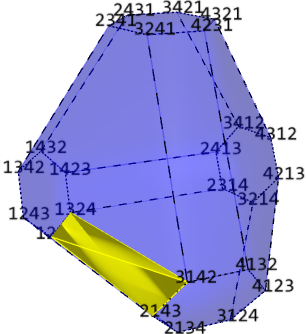}
        \caption{$(1,5,6,7)$}
    \end{subfigure}
    \caption{The convex hull of $\{(1234), (1324), (2143), (3142)\} \cdot \ba$ in two different orbit permutahedra. Note the simplex has negative volume in (a) and positive volume in (b).}
    \label{fig:changesign}
\end{figure}

\begin{prop} There exist $\ba, \ba'$ which are generic in the sense of \cref{prop:generic-simplices}, but the oriented matroids of $W \cdot \ba$ and $W \cdot \ba'$ differ. In particular, there is a collection $\{\cS^i\}$ of subsets of $W$ which is a (regular) $\ba$-triangulation but not a $\ba'$-triangulation, though all $\cS^i$ are $\ba$- and $\ba'$-simplices.
\end{prop}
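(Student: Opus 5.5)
The proof is by explicit example in $W=S_4$, using the configuration of \cref{fig:changesign}. Take $\cS_0=\{(1234),(1324),(2143),(3142)\}$, $\ba=(1,2,3,6)$ and $\ba'=(1,5,6,7)$. First I check that both points are generic in the sense of \cref{prop:generic-simplices}: for every $4$-subset $\cS$ that is not persistently coplanar, the determinant $\det[v_1\cdot\ba,\dots,v_4\cdot\ba]$ (with coordinates summing to a nonzero constant, as in the chirotope definition) is nonzero. This is a finite computation over the $\binom{24}{4}$ subsets. Next I compute the chirotope value on $\cS_0$ and find that it has opposite signs at $\ba$ and $\ba'$, as the figure indicates. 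Since both values are nonzero, $\cS_0$ is both an $\ba$-simplex and an $\ba'$-simplex, but the oriented matroids of $W\cdot\ba$ and $W\cdot\ba'$ differ. This proves the first assertion.

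For the triangulation assertion, I convert the sign change into a circuit whose orientation flips. Choose a fifth permutation $u$, adjacent to the four points of $\cS_0$ in the sense that the five points $\cS_0\cup\{u\}$, once projected to the $3$-dimensional affine hull, lie in general position. For such a $5$-point configuration in $\bR^3$, the unique Radon partition is determined by the signs of the five $4\times4$ minors. A flip of the sign on $\cS_0$ alone, with the other four minors keeping their signs, changes the Radon partition, and hence changes which of the two triangulations of $\conv(\cS_0\cup\{u\})$ exists.

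Concretely, I take $\{\cS^i\}$ to be a regular $\ba$-triangulation of $\wperm_\ba$ obtained from a generic height function (for instance, via Sage), chosen so that it contains simplices $\cS^1,\cS^2$ that share a facet and whose union is the $5$-point set $\cS_0\cup\{u\}$, one of them having $\cS_0$ as a facet-adjacent pair. Rather than arguing abstractly, I will verify directly at $\ba'$ that every $\cS^i$ is still an $\ba'$-simplex (this follows from genericity) but that two simplices, say $\cS^1$ and $\cS^2$, now overlap in their interiors. To do this, I compute the signed volumes of $\cS^1$ and $\cS^2$ relative to the separating hyperplane through their common facet. At $\ba$ these lie on opposite sides of the common facet, and at $\ba'$ on the same side. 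That contradicts the intersection-in-a-common-face condition, so $\{\cS^i\}$ is not an $\ba'$-triangulation. Equivalently, the volumes at $\ba'$ sum to more than $\mathrm{vol}(\wperm_{\ba'})$.

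The main obstacle is locating such a triangulation explicitly: the sign flip has to be witnessed by a pair of adjacent simplices of a single regular triangulation. My first attempt will be to pull the vertices of $\cS_0$ first in a placing/pulling order, so that $\cS_0$ itself, or a simplex sharing three of its vertices, appears in the triangulation next to the relevant neighbor. I will then confirm the failure at $\ba'$ numerically with the volume-sum test, which is a clean certificate.
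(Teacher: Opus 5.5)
Your plan is essentially the paper's proof. It uses the same $\cS_0$, $\ba=(1,2,3,6)$ and $\ba'=(1,5,6,7)$, together with an explicit regular $\ba$-triangulation containing $\cS_0$ ($\cS_0$ is $S_{33}$ in the paper's list), which fails at $\ba'$ because two of its simplices overlap in their interiors; the paper certifies this by computing $S_{34}\cap S_{38}$ to be $3$-dimensional, while your same-side test also succeeds on that list for $\cS_0$ and its neighbor $S_{34}$ across the facet $\{(1234),(1324),(3142)\}$. Note that one member of your adjacent pair must be a simplex whose sign flips (such as $\cS_0$) and the other must not, and that placing the four elements of $\cS_0$ first already yields a regular $\ba$-triangulation containing $\cS_0$.
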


\begin{proof}

Consider $\cS=\{(1234), (1324), (2143), (3142)\} \subseteq S_4$. Then the signed volume of $\conv(\cS\cdot \ba)$ is positive for $\ba=(1,2,3,6)$ and is negative for $\ba'=(1,5,6,7)$ (see Figure~\ref{fig:changesign}). From this, consider the regular triangulation of $\wperm_\ba$ given by the following simplices.

\begin{multicols}{2}
\begin{enumerate}[label=$S_{\arabic*}$, itemsep=1pt]
    \item $\{(3421), (4231), (4312), (4321)\}$
    \item $\{(3421), (4213), (4231), (4312)\}$
    \item $\{(3412), (3421), (4213), (4312)\}$
    \item $\{(3421), (4132), (4213), (4231)\}$
    \item $\{(3241), (3421), (4132), (4231)\}$
    \item $\{(3421), (4123), (4132), (4213)\}$
    \item $\{(3412), (3421), (4123), (4213)\}$
    \item $\{(3214), (3412), (4123), (4213)\}$
    \item $\{(3241), (3421), (4123), (4132)\}$
    \item $\{(3142), (3241), (4123), (4132)\}$
    \item $\{(3241), (3412), (3421), (4123)\}$
    \item $\{(3142), (3241), (3412), (4123)\}$
    \item $\{(3142), (3214), (3412), (4123)\}$
    \item $\{(3124), (3142), (3214), (4123)\}$
    \item $\{(2431), (3241), (3412), (3421)\}$
    \item $\{(2431), (3142), (3241), (3412)\}$
    \item $\{(2413), (3142), (3214), (3412)\}$
    \item $\{(2413), (2431), (3142), (3412)\}$
    \item $\{(2341), (2431), (3142), (3241)\}$
    \item $\{(2413), (3124), (3142), (3214)\}$
    \item $\{(2314), (2413), (3124), (3214)\}$
    \item $\{(2314), (2413), (3124), (3142)\}$
    \item $\{(2134), (2314), (3124), (3142)\}$
    \item $\{(1432), (2413), (2431), (3142)\}$
    \item $\{(1432), (2341), (2431), (3142)\}$
    \item $\{(1423), (2314), (2413), (3142)\}$
    \item $\{(1423), (1432), (2413), (3142)\}$
    \item $\{(1342), (1432), (2341), (3142)\}$
    \item $\{(1324), (2134), (2314), (3142)\}$
    \item $\{(1324), (1423), (2314), (3142)\}$
    \item $\{(1234), (2134), (2143), (3142)\}$
    \item $\{(1324), (1342), (2143), (3142)\}$
    \item $\{(1234), (1324), (2143), (3142)\}$
    \item $\{(1234), (1324), (2134), (3142)\}$
    \item $\{(1342), (1423), (1432), (3142)\}$
    \item $\{(1324), (1342), (1423), (3142)\}$
    \item $\{(1243), (1324), (1342), (2143)\}$
    \item $\{(1234), (1243), (1324), (2143)\}$
\end{enumerate}
\end{multicols}
    When thinking of these simplices as subsets of $\wperm_{\ba'}$, a straightforward computation yields that $$S_{34}=\{(1,5,6,7),(1,6,5,7),(5,1,6,7),(6,1,7,5)\}\cap S_{38}=\{(1,5,6,7),(1,5,7,6),(1,6,5,7),(5,1,7,6)\}$$ is a $3$ dimensional simplex defined by \[\conv(\{(1,5,6,7),(1,6,5,7),(11/3,8/3,19/3,19/3),(8/3,11/3,19/3,19/3)\}),\] so this is not a triangulation of $\wperm_{\ba'}$ despite all of the simplices being full dimensional.
\end{proof}

\section{Asymptotics on persistent and generic simplices}\label{sec:asymptotics}

Set
\[\gsimp(W):=\#\{\cS\subseteq W\mid \cS \text{ is a }\ba\text{-simplex}\} \quad \text{and} \quad \psimp(W):=\#\{\cS\subseteq W\mid \cS \text{ is a persistent simplex}\}.\]

In this section, we obtain some asymptotics on these quantities.
Clearly,
\[
\psimp(W)
\le
\gsimp(W)
\le
\binom{|W|}{d+1}
\]
where the last binomial coefficient is the number of 
\((d+1)\)-subsets of vertices.
 To obtain asymptotics, our key observation is that persistent simplices in a facet can be lifted to
persistent simplices in the whole polytope.

\begin{lem}\label{lem:facet-lifting}
Let $W_s$ be the maximal parabolic subgroup of $W$ generated by removing a simple reflection $s$. Let $F$ be a facet of $\wperm_\ba$ corresponding to a coset of $W_s$.

Suppose that \(\cS \cdot \ba\subseteq wW_s \cdot \ba\) is a simplex in
this facet for every $\ba\in B$. Then \(\conv(\cS \cdot \ba)\) has \(d\) vertices and spans the facet hyperplane of
\(F\). If \(v\in W\setminus wW_s\), then
\(\cS\cup\{v\}\) is a persistent $\ba$-simplex. 
\end{lem}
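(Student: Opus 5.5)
The plan is to show that the facet $F$ is a $(d-1)$-dimensional face of $\wperm_\ba$ that is cut out by a supporting hyperplane, and that every vertex of $\wperm_\ba$ not in $F$ lies strictly off that hyperplane. Adding any such vertex to a simplex spanning $F$ then yields a $d$-simplex, for every $\ba\in B$.

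\textbf{Step 1: the facet.} By \cref{lem:consistent-type}, the normal fan of $\wperm_\ba$ is the $W$-braid arrangement for every $\ba \in B$. So the face lattice is independent of $\ba$, and the facet corresponding to the coset $wW_s$ has vertex set exactly $wW_s\cdot\ba$. Concretely, let $\omega_s$ be the fundamental coweight, i.e.\ $\langle \omega_s,\alpha_t\rangle=0$ for $t\neq s$ and $\langle\omega_s,\alpha_s\rangle>0$. Then $c=w\cdot\omega_s$ is a ray of the arrangement lying in $\overline{B_{v}}$ for exactly the $v\in wW_s$. Using the computation in the proof of \cref{lem:consistent-type}, $\langle c,\cdot\rangle$ attains its maximum over $W\cdot\ba$ exactly on $wW_s\cdot\ba$. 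Since $\wperm_\ba$ is $d$-dimensional, the facet hyperplane $H=\{x:\langle c,x\rangle=\max\}$ meets $\wperm_\ba$ in the $(d-1)$-dimensional face $F$.

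\textbf{Step 2: the simplex spans $H$.} By hypothesis, $\conv(\cS\cdot\ba)$ is a simplex of full dimension $d-1$ inside $F\subseteq H$. So it has $d$ affinely independent vertices, and their affine span is $(d-1)$-dimensional and contained in the affine hyperplane $H$ (taken relative to the affine span of $\wperm_\ba$). Hence the affine span equals $H$.

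\textbf{Step 3: adding a vertex off the facet.} For $v\notin wW_s$, the orbit map is injective because $\ba$ is in the open chamber, so $v\cdot\ba$ is a vertex of $\wperm_\ba$ distinct from the vertices of $F$. Because $F=\wperm_\ba\cap H$ and its vertex set is exactly $wW_s\cdot\ba$, we get $v\cdot\ba\notin H$, i.e.\ $\langle c,v\cdot\ba\rangle<\max$. A point off the affine span of $d$ affinely independent points gives $d+1$ affinely independent points. So $\conv((\cS\cup\{v\})\cdot\ba)$ is a $d$-simplex. As this holds for every $\ba\in B$, $\cS\cup\{v\}$ is a persistent simplex.

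The main obstacle is Step 1: making precise that the vertex set of the facet is exactly the coset $wW_s\cdot\ba$, uniformly in $\ba$, so that no other orbit point can accidentally lie on $H$. I would prove this directly with the strict version of the inequality in \cref{lem:consistent-type}. For $u\notin wW_s$, write $w^{-1}u\notin W_s$. Then $\ba-w^{-1}u\cdot\ba$ is a nonnegative combination of simple roots, and the coefficient of $\alpha_s$ is positive. Pairing with $\omega_s$ then gives a strict inequality.
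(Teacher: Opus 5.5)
Your proposal is correct and follows the same route as the paper. The $d$ vertices of $\cS\cdot\ba$ span the facet hyperplane, and since $v\notin wW_s$ the vertex $v\cdot\ba$ lies off that hyperplane, which gives $d+1$ affinely independent points for every $\ba\in B$. The paper simply asserts the off-hyperplane step; you justify it through the functional $\langle w\cdot\omega_s,\cdot\rangle$ and the strict positivity of the $\alpha_s$-coefficient of $\ba-w^{-1}u\cdot\ba$ when $w^{-1}u\notin W_s$, which is a correct and standard argument.
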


\begin{proof}
For every \(\ba\in B\), the vertices indexed by \(\cS\) are affinely
independent and lie in the facet hyperplane of \(F\). Since
\(v\notin wW_s\), the vertex \(v\cdot\ba\) is not contained in this facet hyperplane.
Consequently the \(d\) vertices of \(\cS\) span a hyperplane and the additional
vertex \(v\cdot\ba\) lies outside that hyperplane. Hence the \(d+1\) vertices indexed
by \(\cS\cup\{v\}\) are affinely independent for every \(\ba\in B\).
\end{proof}

\begin{corollary}\label{cor:recursive-lower-bound}
We have that 
\[
\psimp(W)
\ge
\frac{1}{d+1}
\sum_{s\in S}
\frac{|W|}{|W_s|}
\left(|W|-|W_s|\right)
]\psimp(W_{s})
\]
with base case
\[
\psimp(\{e\})=1.
\]
\end{corollary}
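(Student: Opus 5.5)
We prove the bound by double counting pairs (facet-simplex, extra vertex), using \cref{lem:facet-lifting} as the lifting mechanism. Fix a simple reflection $s \in S$. The facets of $\wperm_\ba$ corresponding to $s$ are indexed by the cosets $wW_s$; there are $|W|/|W_s|$ of them. Each such facet $wW_s\cdot\ba = w\cdot(W_s\cdot\ba)$ is an isometric translate of $\conv(W_s\cdot \ba)$. This is an orbit $W_s$-permutahedron of dimension $d-1$, based at $\ba$ (or at its projection onto the span of the roots of $W_s$). The key point is that as $\ba$ ranges over $B$, this projected base point ranges inside the fundamental chamber of $W_s$. Hence every persistent simplex $\cT\subseteq W_s$ of $W_s$ gives a set $\cS = w\cT \subseteq wW_s$ whose image $\cS\cdot\ba$ is a $(d-1)$-simplex in the facet for every $\ba\in B$. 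This yields $\psimp(W_s)$ persistent facet simplices in each of the $|W|/|W_s|$ facets of type $s$. Distinct cosets give disjoint sets, and distinct $\cT$ give distinct $w\cT$.

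Next, for each such facet simplex $\cS\subseteq wW_s$ and each $v\in W\setminus wW_s$ (there are $|W|-|W_s|$ choices), \cref{lem:facet-lifting} says that $\cS\cup\{v\}$ is a persistent simplex of $W$. Summing over $s$, cosets, facet simplices and $v$ gives
\[
\sum_{s\in S}\frac{|W|}{|W_s|}\bigl(|W|-|W_s|\bigr)\psimp(W_s)
\]
pairs $(\cS, v)$. Each pair produces a persistent simplex of $W$.

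To account for overcounting, observe that a given persistent simplex $\cS'$ of $W$, with $d+1$ elements, arises from a pair $(\cS,v)$ only when $\cS=\cS'\setminus\{v\}$ for some $v\in\cS'$. So there are at most $d+1$ choices of $v$. Once $v$ is fixed, $\cS$ is determined, and so is the facet (the affine span of $\cS\cdot\ba$ is the facet hyperplane). The label $s$ and the coset are then determined as well. Thus each persistent simplex is counted at most $d+1$ times, and dividing by $d+1$ gives the inequality. The base case $\psimp(\{e\})=1$ holds because the single point is a $0$-simplex in a $0$-dimensional space.

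The main obstacle is making the identification of the facet with a lower-rank orbit permutahedron rigorous. The rank of $W_s$ is $d-1$, so one has to work in the span $V_s$ of its roots. We must check that the orthogonal projection of $\ba\in B$ to $V_s$ lands in the fundamental chamber of $W_s$, which holds since $\langle\ba,\alpha\rangle>0$ for the simple roots $\alpha\neq\alpha_s$. We must also check that $W_s\cdot\ba$ is a translate of $W_s\cdot\pi(\ba)$, which holds because $W_s$ fixes $V_s^\perp$ pointwise. This shows that the image of $B$ lies inside the $W_s$-chamber, which is all we need, since persistence for $W_s$ then implies affine independence for every $\ba\in B$. If $W_s$ is reducible, the same argument applies, with $\psimp(W_s)$ interpreted for the reducible group.
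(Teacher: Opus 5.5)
Your proposal is correct and follows the same route as the paper. You identify each type-$s$ facet with a rigidly moved orbit $W_s$-permutahedron, lift each persistent facet simplex by every outside vertex via \cref{lem:facet-lifting}, and bound the overcount by $d+1$, since the removed vertex determines the facet through its affine hyperplane. Your projection argument ($\pi(\ba)$ lies in the $W_s$-chamber and $W_s\cdot\ba$ is a translate of $W_s\cdot\pi(\ba)$) makes precise the paper's one-line claim that the facet is a translate of an orbit permutahedron for $W_s$.
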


\begin{proof}
Fix a simple reflection $s$. The facets of type \(s\) are indexed by the \(|W|/|W_s|\) left
cosets \(wW_s\). Each such facet has exactly \(|W_s|\) vertices, so there are \(|W|-|W_s|\) vertices not in the facet.

Each facet \(F\) is equal to $\conv(wW_s \cdot\ba)$ for some $s$ and some $w$. Since $\conv(wW_s \cdot\ba)$ is a translate of an orbit permutahedron for $W_s$, any persistent simplex for $W_s$ will give a persistent \((d-1)\)-simplex in the facet.
Applying Lemma~\ref{lem:facet-lifting}
to each such simplex and each vertex outside the facet gives
\[
\frac{|W|}{|W_s|}\left(|W|-|W_s|\right)\psimp(W_s)
\]
persistent \(d\)-simplices, counted with multiplicity. Summing over
all simple reflections gives the raw count
\[
R=
\sum_{s\in S}
\frac{|W|}{|W_{s}|}
\left(|W|-|W_{s}|\right)\psimp(W_{s}).
\]

It remains to control multiplicity. Suppose a fixed persistent \(d\)-simplex
\(\cS\subset W\), with \(|\cS|=d+1\), occurs in this raw count. One occurrence is
determined by a choice of the outside vertex \(v\in \cS\), together with a facet
containing the remaining \(d\)-element set \(\cS\setminus\{v\}\). There are only
\(d+1\) choices for \(v\). For a fixed \(v\), the set \(\cS\setminus\{v\}\) is
affinely independent and spans a unique hyperplane. If it lies in a facet and
spans that facet hyperplane, then that facet is uniquely determined: two
facets of a full-dimensional polytope with the same supporting hyperplane are
the same facet. Therefore, each persistent \(d\)-simplex is counted at most
\(d+1\) times in \(R\). Hence the number of distinct persistent
simplices is at least \(R/(d+1)\), which is the asserted bound.

The base case is the rank-zero Coxeter group. Its Coxeter permutahedron is a
single point, and it has exactly one top-dimensional simplex, namely that
point.
\end{proof}

The facet recurrence gives a large, explicit, recursively generated family of
persistent simplices, but it need not account for all persistent simplices.

For an explicit example, take \(W=S_n\) the symmetric group, with orbit permutahedron
\[
P_n
=
\conv\{(a_{\sigma^{-1}(1)},\dots,a_{\sigma^{-1}(n)}):\sigma\in S_n\},
\quad \text{where} \quad
a_1<a_2<\cdots<a_n.
\]
Write 
\[
\psimp_n=\psimp(S_n),
\qquad
\gsimp_n=\gsimp(S_n).
\]
In this case, the upper bound for \((n-1)\)-simplices becomes
\[
\gsimp_n
\le
\binom{n!}{n}.
\]

\begin{lem}\label{lem:asymptotics}
For the type \(A_{n-1}\) Coxeter permutahedron,
\[
((n-1)!)^n \le \psimp_n \le \gsimp_n \le \binom{n!}{n}.
\]
Consequently,
\[
e^{-n+\frac12\log n+O(1)}\binom{n!}{n}
\le \psimp_n
\le
\gsimp_n \le
\binom{n!}{n}.
\]
In particular,
\[
\log \psimp_n
=
 n^2\log n-n^2-\frac12 n\log n+O(n).
\]
\end{lem}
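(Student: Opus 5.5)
The lower bound $((n-1)!)^n\le\psimp_n$ will come from \cref{cor:recursive-lower-bound}; the other two inequalities in the chain are already noted above. For $W=S_n$ we have $d=n-1$, so $d+1=n$. Removing the simple reflection $s_k=(k~k{+}1)$ gives $W_{s_k}\cong S_k\times S_{n-k}$. A persistent simplex of a product of Coxeter groups is obtained by taking a product (join-type) construction, but the cleanest route is to keep only two terms of the sum. These are $k=1$ and $k=n-1$, where $W_s\cong S_{n-1}$ and $|W|/|W_s|=n$, with $|W|-|W_s|=n!-(n-1)!=(n-1)\,(n-1)!$. The remaining terms are nonnegative and can be dropped. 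This yields
\[
\psimp_n\ \ge\ \frac{1}{n}\cdot 2\cdot n\,(n-1)(n-1)!\,\psimp_{n-1}\ \ge\ (n-1)!\,\psimp_{n-1}.
\]
We need $\psimp(S_{n-1}\times S_1)=\psimp_{n-1}$, which holds because the trivial factor does not change the orbit polytope. We also need the facet to be a translate of an orbit $S_{n-1}$-permutahedron with base point in the fundamental chamber of $W_s$, which is part of \cref{cor:recursive-lower-bound}. Inducting from $\psimp_1=1$ gives $\psimp_n\ge\prod_{m=1}^{n}(m-1)!$. That is weaker than $((n-1)!)^n$, so I will redo the bound more carefully.

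The sharper bound must use the full factor $(n-1)(n-1)!$ together with both $k=1$ and $k=n-1$. These give $\psimp_n\ge 2(n-1)\,(n-1)!\,\psimp_{n-1}$. Unrolling gives $\psimp_n\ge\prod_{m=2}^n 2(m-1)(m-1)!$, which is still only about $\prod (m!)$, of order $e^{n^2\log n/2}$. So the recursion alone will not give $((n-1)!)^n\approx e^{n^2\log n}$, and I will instead try a direct construction in the spirit of \cref{prop:bruhat-graph-persistent-simplices}. Choose $\cS=\{w_1,\dots,w_n\}$ with $w_{i+1}=(c_i~c_i')\,w_i$ along a path. Then the transposition graph $T$ contains a spanning tree, and $\cS$ is persistent. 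For a cleaner count, I would fix a spanning tree shape and take $w_1$ free ($n!$ choices), but this gives only about $n!\cdot n^{O(n)}$ sets, which is far too few.

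The real source of $((n-1)!)^n$ should therefore be the iterated facet lifting applied with multiplicity counted more generously. The intended reading is probably that each of the $n$ vertices is chosen from a distinct coset of a chain of parabolics, with $(n-1)!$ choices per vertex. To make this rigorous, I would consider the $n$ cosets $\{w: w^{-1}(n)=j\}$ for $j=1,\dots,n$, each of size $(n-1)!$, and pick one permutation from each. I would then try to prove that any transversal is a persistent simplex. Its vertices $w\cdot\ba$ have last-position coordinate pattern $a_{\cdot}$ placed in distinct slots. I would attempt to show the $n\times n$ matrix of these vectors is nonsingular for all $\ba\in B$ by a sign or dominance argument. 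If that fails, I would settle for persistent $\ba$-simplex bounds from \cref{lem:facet-lifting} on suitable sub-families. This step is the main obstacle, and I am genuinely unsure it holds as stated.

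Granting the lower bound, the asymptotic statements are pure Stirling calculations. Using $\binom{N}{n}=\frac{N^n}{n!}\prod_{i<n}(1-i/N)$ with $N=n!$, the product is $1+o(1)$, so $\binom{n!}{n}\sim (n!)^n/n!$. Then $((n-1)!)^n/\binom{n!}{n}\sim n!/n^n=e^{-n+\frac12\log n+O(1)}$ by Stirling. Finally, $\log\psimp_n=n\log n!-\log n!+O(n)$. Expanding $\log n!=n\log n-n+\frac12\log n+O(1)$ gives $n^2\log n-n^2-\frac12 n\log n+O(n)$ after absorbing the lower-order terms into $O(n)$.
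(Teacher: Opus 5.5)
Your proposal does not prove the lower bound $((n-1)!)^n\le\psimp_n$. The step you flag as the main obstacle is not just unproven but false.

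You correctly observe that the facet recursion of Corollary~\ref{cor:recursive-lower-bound} only gives $\prod_{m}2(m-1)(m-1)!$, which grows like $e^{\frac12 n^2\log n}$. But the claim that every transversal of the cosets $\{w: w^{-1}(n)=j\}$ is a persistent simplex already fails for $n=4$. Take $\{1234,2143,3412,4321\}$, with vertices $(a_1,a_2,a_3,a_4)$, $(a_2,a_1,a_4,a_3)$, $(a_3,a_4,a_1,a_2)$, $(a_4,a_3,a_2,a_1)$. These put $a_4$ in four distinct coordinates, so the set is a transversal. Yet the four points are coplanar exactly when $a_2-a_1=a_4-a_3$, for example at $\ba=(1,2,3,4)$ (Example~\ref{ex:dif-number-coplanar}). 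So no sign or dominance argument valid on all of $B$ can exist for this family. Your fallback through Lemma~\ref{lem:facet-lifting} is too vague to recover growth of order $e^{n^2\log n}$. Your upper bounds and Stirling computations are fine.

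The paper uses the same transversal construction, tracking the coordinate that contains $a_1$ rather than $a_n$. It adds a degeneration argument you were missing. With $\ba^{(T)}=(-T,b_2,\dots,b_n)$ one has $v_i(T)/T\to -e_i$. These limit points are affinely independent, so for large $T$ the set is an $\ba^{(T)}$-simplex and its determinant polynomial is not identically zero.

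Be aware of what this proves. A determinant that is not identically zero only says the set is an $\ba$-simplex for \emph{generic} $\ba$, so it is counted by $\gsimp_n$. Persistence requires nonvanishing at \emph{every} $\ba\in B$. The counterexample above lies in the paper's family too, so the paper's concluding step ``$\cS$ contributes to $\psimp_n$'' is false for some members of that family.

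What the argument actually yields is $((n-1)!)^n\le\gsimp_n\le\binom{n!}{n}$, plus the stated asymptotics for $\log\gsimp_n$, for which your Stirling estimates suffice. The same statements for $\psimp_n$ would need a subfamily of transversals whose determinants have no zero on $B$ and which still has size $e^{-O(n)}\binom{n!}{n}$. Neither your proposal nor the paper's proof supplies one; the inequality is only consistent with the table for $n\le 4$. Your doubt at that step was well founded.
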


\begin{proof}
The upper bound is the trivial bound by the total number of \(n\)-element
subsets of the vertex set.

For the lower bound, let
\[
G_i=\{\sigma\in S_n:\sigma(i)=1\}.
\]
Thus \(|G_i|=(n-1)!\). For each tuple
\(\cS=(\sigma_1,\dots,\sigma_n)\in G_1\times\cdots\times G_n\), consider the
\(n\)-element set of vertices
\[
\cS\cdot\ba
=
\{v_1,\dots,v_n\},
\qquad
v_i=(a_{\sigma_i^{-1}(1)},\dots,a_{\sigma_i^{-1}(n)}).
\]
The vertex \(v_i\) is the unique vertex in this set whose \(i\)-th coordinate
is \(a_1\). Hence the vertices \(v_1,\dots,v_n\) are distinct, and the tuple
\((\sigma_1,\dots,\sigma_n)\) can be recovered from \(\cS\cdot\ba\). Therefore
this construction gives exactly \(((n-1)!)^n\) distinct \(n\)-subsets of the
vertex set.

It remains to show that each such subset is counted by \(\psimp_n\), i.e.
that its affine-dependence determinant is not identically zero as a polynomial
in the chamber parameters. Fix real numbers
\(b_2<\cdots<b_n\), and set
\[
a^{(T)}=(-T,b_2,\dots,b_n)
\]
for \(T\) sufficiently large, so that \(a^{(T)}\) lies in the open chamber.
For the associated vertices \(v_i(T)\), we have
\[
\frac{1}{T}v_i(T)\longrightarrow -e_i
\qquad\text{as }T\to\infty,
\]
because the \(i\)-th coordinate of \(v_i(T)\) is \(-T\), while all other
coordinates are fixed. The points
\(-e_1,\dots,-e_n\) are affinely independent. Affine independence is an open
condition, and scaling all points by \(1/T\) preserves affine independence.
Therefore, for all sufficiently large \(T\), the vertices
\(v_1(T),\dots,v_n(T)\) are affinely independent. Thus the determinant
polynomial for \(\cS\cdot\ba\) is not identically zero, and
\(\cS\) contributes to \(\psimp_n\). 

Using Stirling's formula and basics of binomials we can justify the statement on asymptotics.

Since \(\psimp_n\) lies between \(((n-1)!)^n\) and \(\binom{n!}{n}\), its
logarithm differs from \(\log\binom{n!}{n}\) by at most \(O(n)\). This gives
\[
\log \psimp_n
=
 n^2\log n-n^2-\frac12n\log n+O(n).
\]
\end{proof}
 \begin{rmk}
     From the above result we have
\[
\psimp_n
=
\exp\left(n^2\log n-n^2-\frac12n\log n+O(n)\right).
\]
Thus, the \(n\)-subsets \(\cS\) for which
\(D_\cS=0\) can change the full binomial count by at most a
factor \(e^{n+O(\log n)}\). On the \(n^2\log n\) scale, the generic count has
the same exponential order as \(\binom{n!}{n}\).
 \end{rmk}

In type $A$, taking just the maximal parabolics of the form $S_{n-1} \times S_1$ in the bound from Corollary~\ref{cor:recursive-lower-bound} gives the bound

\begin{equation} \label{goodeq}
\psimp_n
 \ge \frac{1}{n}\left( \binom{n}{1}
\left(n!-(n-1)!\right)
\psimp(S_{n-1}) + \binom{n}{n-1}
\left(n!-(n-1)!\right)
\psimp(S_{n-1})\right).
\end{equation}

Let \(L_n\) denote the lower bound obtained by iterating the recurrence in Equation \ref{goodeq}.
This quantity satisfies
\[
L_2=1, \text{ and for }n \ge 3,~ L_n
=
2\bigl(n!-(n-1)!\bigr)L_{n-1}=
2(n-1)(n-1)!\,L_{n-1}.
\]
Hence
\[
L_n
=
2^{n-2}\prod_{m=2}^{n-1} m\,m!
=
2^{n-2}(n-1)!\prod_{m=2}^{n-1} m!.
\]

The following table compares this lower bound to $\psimp_n$ and $\gsimp_n$ for small $n$. 

\[
\begin{array}{c|r|r|r|r}
n & L_n & \psimp_n & \gsimp_n & \binom{n!}{n} \\
\hline
3 & 8 & 20 & 20 & 20\\
4 & 288 & 9{,}132 & 10{,}152 & 10{,}626\\
5 & 55{,}296 & ? & 187{,}041{,}024 & 190{,}578{,}024\\
\end{array}
\]

\section{Subdivisions of $W$-matroid polytopes into $W$-matroid polytopes are persistent}\label{sec:matroidal-subdivisions}

In this section, we turn from persistent triangulations to persistent subdivisions. Rather than fixing our attention on orbit permutahedra $\wperm_\ba$, we will consider \emph{orbit polytopes}, which are polytopes of the form $P^\cS(\ba):=\conv(\cS \cdot \ba)$ for $\cS \subset W$ and $\ba \in B$. However, the combinatorial type of an orbit polytope may depend on $\ba$. To remove this complication, we will consider orbit polytopes $P^\cS(\ba)$ that are also generalized permutahedra.

\begin{defn}
    A polytope $P$ is a \emph{generalized $W$-permutahedron} if all of its edges are parallel to roots of $W$, or equivalently, if its normal fan coarsens the $W$-braid arrangement. Following \cite[Section 6.4]{BGW}, an orbit polytope $P^\cS(\ba)$ which is also a generalized $W$-permutahedron is called a \emph{$W$-matroid polytope}\footnote{ \cite{BGW} would use the terminology ``Coxeter matroid polytopes for $(W, 1)$". In type A, if we choose $\ba=(1,2, \dots, n)$, $W$-matroid polytopes $P^{\cS}(\ba)$ are often called (complete) flag matroid polytopes.}.
\end{defn}

For the equivalence of the two definitions of $W$-generalized permutahedra, see e.g. \cite[Proposition 2.6]{ACEP}. Generalized permutahedra and matroid polytopes have been extensively studied, particularly in type A \cite{PRW08,Post09}.
By \cite[Theorem 6.4.1]{BGW}, $P^\cS(\ba)$ is a $W$-matroid polytope if and only if $\cS$ is a \emph{Coxeter matroid} for $(W,1)$. So in particular, whether or not an orbit polytope is a $W$-matroid polytope depends only on $\cS$ and not on $\ba$.
As the next lemma shows, the combinatorial type of a $W$-matroid polytope $P^\cS(\ba)$ also depends only on $\cS$ and not on $\ba \in B$.

For any orbit polytope $P^\cS(\ba)$, the vertices $\cS \cdot \ba$ are in natural bijection with $\cS$. We label the maximal cone $N_{v \cdot \ba}$ of the normal fan with the element $v \in \cS$, and obtain in this way the \emph{labeled normal fan} of $P^\cS(\ba)$.

\begin{lem}\label{lem:consistent-type-gen}
    Fix $\cS\subseteq W$ and $\ba \in B$. If $P^\cS(\ba)$ is a $W$-matroid polytope, its labeled normal fan does not depend on the choice of $\ba\in B$. In particular, its combinatorial type does not depend on $\ba \in B$.
\end{lem}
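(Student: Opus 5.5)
Since $P^\cS(\ba)$ is a generalized $W$-permutahedron, its normal fan coarsens the $W$-braid arrangement. Each maximal cone of its normal fan is therefore a union of closed chambers $\overline{B_u}$, $u\in W$. The plan is to show that the chamber $B_u$ lies in the normal cone of the vertex $v\cdot\ba$ exactly when $v$ is determined from $\cS$ and $u$ by a rule not involving $\ba$.

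\textbf{Step 1.} Fix $u\in W$ and a point $c\in B_u$. Since $c$ lies in the interior of a chamber and the fan coarsens the braid arrangement, $c$ lies in the interior of a unique maximal cone $N_{v\cdot\ba}$. Hence $\langle c,\cdot\rangle$ is maximized on $P^\cS(\ba)$ uniquely at $v\cdot\ba$, and this $v$ does not depend on which $c\in B_u$ we choose. Write $v=v_\ba(u)$. The labeled normal fan is then the assignment $v\mapsto \bigcup_{u:\,v_\ba(u)=v}\overline{B_u}$, so it suffices to show $v_\ba(u)$ is independent of $\ba$.

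\textbf{Step 2.} Identify $v_\ba(u)$ combinatorially. Pull back by $u^{-1}$: $\langle c,x\cdot\ba\rangle=\langle u^{-1}c,\,u^{-1}x\cdot\ba\rangle$, and $u^{-1}c\in B$. For $c'\in B$ and $y,y'\in W$ with $y\le y'$ in Bruhat order, the standard fact that $y\cdot\ba-y'\cdot\ba$ is a nonnegative combination of positive roots gives $\langle c',y\cdot\ba\rangle\ge\langle c',y'\cdot\ba\rangle$, with strict inequality when $y\neq y'$. So $v_\ba(u)$ maximizes $\langle c',u^{-1}x\cdot\ba\rangle$ over $x\in\cS$. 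The key input is the Gelfand–Serganova/BGW characterization of Coxeter matroids: $\cS$ is a Coxeter matroid exactly when, for every $u$, the set $u^{-1}\cS$ has a unique Bruhat-minimal element. Letting $x_u$ be the element of $\cS$ with $u^{-1}x_u$ that minimum, every other $x\in\cS$ satisfies $u^{-1}x_u<u^{-1}x$, so $v_\ba(u)=x_u$. This is independent of $\ba$.

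If I wish to avoid citing the Bruhat-minimum characterization, the alternative is a continuity argument. Move $\ba$ to $\ba'$ along the segment $\ba_\tau$ inside $B$. Each $\tau\mapsto v_{\ba_\tau}(u)$ can only change at values where two vertices $x\cdot\ba_\tau$ and $x'\cdot\ba_\tau$ tie for the maximum of a fixed generic $c$. But $\langle c,x\cdot\ba_\tau\rangle$ is affine in $\tau$, so the set of $\tau$ where $v_{\ba_\tau}(u)$ takes a given value is relatively closed. This step is the main obstacle: I would need to show these sets are also open, presumably by using that the fan stays a coarsening of the braid arrangement for all $\tau$, so the maximizer at a point $c$ interior to a chamber is always unique. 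A locally constant function on the connected interval $[0,1]$ is then constant.

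\textbf{Step 3.} Since the labeled normal fan is independent of $\ba$, and the face lattice is anti-isomorphic to the lattice of cones of the normal fan, the combinatorial type of $P^\cS(\ba)$ (with vertex labels) is independent of $\ba$.
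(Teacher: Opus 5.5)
Your main argument (Steps 1--3 via the Bruhat-minimum property) is correct, but it is organized differently from the paper's proof.

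The paper works vertex by vertex. With $\Phi_v=\{\alpha\in v(\Phi_+): s_\alpha v\in\cS\}$, it shows $N_{v\cdot\ba}=\{c:\langle c,\alpha\rangle\ge 0 \text{ for all } \alpha\in\Phi_v\}$. One inclusion comes from edges at $v\cdot\ba$ being parallel to roots, so $v\cdot\ba-y$ lies in the cone spanned by $\Phi_v$ for $y\in P^\cS(\ba)$. The other comes from testing $c$ against the points $s_\alpha v\cdot\ba$. This gives an explicit inequality description of each vertex cone depending only on $(v,\cS)$, and it uses only the edge condition. To apply it at every $\ba'$, the paper implicitly uses that $P^\cS(\ba')$ is again a $W$-matroid polytope, which it takes from \cite[Theorem 6.4.1]{BGW}.

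You instead work chamber by chamber. You invoke the Gelfand--Serganova theorem in its strong form, the maximality property. BGW states it with Bruhat maxima of $w^{-1}\cS$; your minimum version is equivalent after left multiplication by $w_0$. Your Bruhat-order computation then shows that for every $\ba'\in B$ and every $c\in B_u$, the unique maximizer is $x_u\cdot\ba'$.

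This treats all $\ba'$ uniformly. The coarsening you assume in Step 1 actually follows from Step 2, so no separate argument at $\ba'$ is needed. It also gives the complementary description $N_{v\cdot\ba}=\bigcup\{\overline{B_u}: u^{-1}v=\min u^{-1}\cS\}$. The price is resting on a deeper theorem.

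The continuity fallback is unnecessary. If you did want it, it closes easily once the maximizer on $B_u$ is known to be unique for every $\tau$: the set of $\tau$ where a given $x$ is that maximizer is cut out by strict inequalities, hence open, and these finitely many sets partition $[0,1]$. That uniqueness, however, itself needs the $W$-matroid property at every $\ba_\tau$.
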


\begin{proof} 
    Let $v\in W$ and $\bv=v\cdot\ba$. Let $\Phi_v=\{\alpha\in v(\Phi_+) \mid s_\alpha v\in \cS\}$. We claim that 
    \[N_\bv=\{c\in (\bR^n)^\ast\mid \langle c,\alpha\rangle \geq0 \text{ for all }\alpha\in \Phi_v\}.\]

    Take $c$ such that $\langle c,\alpha\rangle\geq 0$ for all $\alpha\in \Phi_\bv$. We have that for any $y\in P^\cS(\ba)$, $y$ is in the vertex span of $\bv$. Thus we have that 
    \[\bv-y=\underset{\alpha\in \Phi_\bv}{\sum}k_\alpha \alpha\]
    where $k_\alpha\geq 0$. Dotting both sides with $c$ gives us that 
    \[\langle c,\bv-y\rangle=\left\langle c,\underset{\alpha\in\Phi_\bv}{\sum}k_\alpha \alpha\right\rangle = \underset{\alpha\in\Phi_\bv}{\sum}k_\alpha \langle c,\alpha\rangle.\]
    Since $\langle c,\alpha\rangle \geq 0$ and $k_\alpha\geq 0$, we have that $\langle c,\bv\rangle\geq \langle c,y\rangle$. Thus $c\in N_\bv$.

    Now let $c\in N_\bv$. We have that $\langle c,\bv\rangle\geq \langle c,y\rangle$ for all $y\in P$. Let $\alpha\in \Phi_\bv$ and consider $s_\alpha\cdot \bv=\bv-2\frac{\langle \bv,\alpha\rangle}{\langle \alpha,\alpha\rangle}\alpha$. Substituting this into the original equation gives us that 
    \[\langle c,\bv\rangle \geq \left\langle c,\bv-2\frac{\langle \bv,\alpha\rangle}{\langle \alpha,\alpha\rangle}\alpha\right\rangle\] \[0\geq -2\frac{\langle \bv,\alpha\rangle}{\langle \alpha,\alpha\rangle}\langle c,\alpha\rangle.\]
    Since $\langle \alpha,\alpha\rangle >0$ and $\langle \bv,\alpha\rangle >0$, we must have that $\langle c,\alpha\rangle \geq 0$. Since this holds true for all $\alpha\in \Phi_\bv$, we get that 
    \[N_\bv=\{c\in (\bR^n)^\ast\mid \langle c,\alpha\rangle \geq0 \text{ for all }\alpha\in \Phi_\bv\}.\]
    
    We have just shown that the normal cone of the vertex $v \cdot \ba$ depends only on $v$ and not on the choice of $\ba\in B$. Thus the labeled normal fan of $P^\cS(\ba)$ does not depend on $\ba$. As the normal fan of a polytope determines the combinatorial type, the second sentence of the lemma follows.
\end{proof}

\begin{rmk}
    One well-studied example of orbit polytopes $P^\cS(\ba)$ which are also $W$-matroid polytopes are \emph{Bruhat interval polytopes}, where $\cS$ is a Bruhat interval in $W$ \cite{TW15}. So in particular the results in this section apply to Bruhat interval polytopes.
\end{rmk}

Suppose $P^\cS(\ba)$ is a $W$-matroid polytope. 
We denote by
\[\ospace^\cS:= \{\conv(\cS\cdot\ba):\ba\in B\}\] the realization space of $W$-matroid polytopes which have the same normal fan as $P^\cS(\ba)$.

\begin{thm}\label{thm:gen_perm-orbit-realization-space}
    Let $B$ be a choice of fundamental chamber for some finite reflection group $W$ and suppose for $\ba \in B$, $P^{\cS}(\ba)$ is a $W$-matroid polytope. Then the realization space $\ospace^\cS$ is homeomorphic to $B$.
\end{thm}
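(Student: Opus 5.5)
I will mimic the proof of \cref{thm:realization-space}, replacing the full orbit by the orbit of $\cS$. Fix an ordering of $\cS = \{v_1,\dots,v_k\}$. I regard $\ospace^\cS$ as a subset of $\bR^{n\cdot k}$, where the $i$th row of a matrix is the vertex $v_i\cdot\ba$, and I give it the subspace topology. By \cref{lem:consistent-type-gen}, the labeled normal fan of $P^\cS(\ba)$ is independent of $\ba$. So the labeling of vertices by $\cS$ is canonical, and each polytope in $\ospace^\cS$ corresponds to a well-defined matrix.

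Define $f:B\to\bR^{n\cdot k}$ by sending $\ba$ to the matrix whose $i$th row is $v_i\cdot\ba$. Each $v_i$ acts linearly, so $f$ is linear and hence continuous, and by definition $\mathrm{im}(f)=\ospace^\cS$. For the inverse, let $g:\bR^{n\cdot k}\to\bR^n$ be the linear map that extracts the first row and applies $v_1^{-1}$. Then $g(f(\ba))=v_1^{-1}v_1\cdot\ba=\ba$. So $f$ is injective, and $g|_{\ospace^\cS}$ is a continuous two-sided inverse of $f:B\to\ospace^\cS$. This makes $f$ a homeomorphism. Note that, unlike in \cref{thm:realization-space}, we cannot assume $e\in\cS$; applying $v_1^{-1}$ handles this.

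The only point needing care is that $\ospace^\cS$ really is a set of labeled vertex matrices. That is, the vertex set of $\conv(\cS\cdot\ba)$ must be exactly $\cS\cdot\ba$, with $|\cS|$ distinct points. Distinctness holds because $W$ acts freely on $B$: if $v\cdot\ba=w\cdot\ba$ for $\ba\in B$, then $v=w$. Each point $v\cdot\ba$ is a vertex because it is an extreme point of the larger polytope $\wperm_\ba$ (\cref{lem:consistent-type}), and hence extreme in any sub-convex hull. With this in place, the remaining steps are routine.
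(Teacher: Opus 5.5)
Your proposal is correct and follows the paper's proof essentially verbatim. You use the same linear map $f$ from $B$ onto $\ospace^\cS$, inverted by projecting to the first row and undoing the action of the first element of $\cS$. Your $v_1^{-1}$ is in fact the accurate version of the paper's ``act by $w'$''. Your extra checks, namely distinct vertices via the free action on $B$ and the canonical labeling from \cref{lem:consistent-type-gen}, fill in details the paper leaves implicit.
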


\begin{proof}
    Fix some arbitrary ordering on $\cS$ and let $w'$ be the first element. We think of $\ospace^\cS$ as a set of matrices living in $\bR^{n\cdot |\cS|}$ where each row is a vertex of the polytope. As the vertices are naturally in bijection with elements of $\cS$, we identify each row with some $w\in \cS$. As $\ospace^\cS\subseteq \bR^{n\cdot |\cS|}$, we endow it with the subspace topology.
    
    Consider the function $f:B\rightarrow\bR^{n\cdot |\cS|}$ which sends $\ba$ to a matrix where the $w$th row is given by $w\cdot\ba$. This is a linear function in the input variables, so it is continuous. By construction, we have that $\mathrm{im}(f)=\ospace^\cS$.
    
    For the inverse, note that the map sending a matrix $P\in \bR^{n\cdot |\cS|}$ to its first row is a projection, so it is continuous. Also note that acting on $\bR^n$ by $w'$ is a reflection, so it is continuous. After restricting the domain to $\ospace^\cS$, the composition of these maps and $f$ are inverses. As $f$ is a continuous map with continuous inverse, it is a homeomorphism and $\ospace^\cS\cong B$.
    
\end{proof}

\begin{corollary}\label{cor:contractible-gen-perm}
   The orbit realization space $\ospace^\cS$ is contractible and thus path-connected.
\end{corollary}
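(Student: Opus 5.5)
The plan is to copy the proof of \cref{cor:contractible}, using \cref{thm:gen_perm-orbit-realization-space} in place of \cref{thm:realization-space}. That theorem already identifies $\ospace^\cS$ with the fundamental chamber $B$ via the homeomorphism $f:B\to\ospace^\cS$, $f(\ba)=(w\cdot\ba)_{w\in\cS}$. Since contractibility and path-connectedness are invariant under homeomorphism, it suffices to prove that $B$ itself is contractible.

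First I will check that $B$ is convex. By definition, $B=\{c\in V:\langle c,\alpha\rangle>0 \text{ for all } \alpha\in\Phi_+\}$ is a finite intersection of open half-spaces, and each of these is convex. Hence for $\ba,\ba'\in B$ and $\tau\in[0,1]$ the point $\tau\ba'+(1-\tau)\ba$ lies in $B$; the proof of \cref{thm:persistent-triangulation} already uses this.

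Next I will fix a basepoint $\ba_0\in B$ and write down the straight-line homotopy $H(\ba,t)=(1-t)\ba+t\ba_0$. By convexity it takes values in $B$, and it is continuous on $B\times[0,1]$. It satisfies $H(\cdot,0)=\mathrm{id}_B$ and $H(\cdot,1)\equiv\ba_0$, so $B$ is contractible. Restricting $H$ to a fixed $\ba$ gives a path from $\ba$ to $\ba_0$, so $B$ is path-connected. Alternatively, one may simply note, as the paper does, that $B$ is an open cone.

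Finally I will transport this structure to $\ospace^\cS$ by conjugating with $f$. The map $f\circ H\circ(f^{-1}\times \mathrm{id})$ contracts $\ospace^\cS$ to the point $\conv(\cS\cdot\ba_0)$. Concretely, it deforms each realization to the basepoint by moving every vertex $w\cdot\ba$ linearly, and by \cref{lem:consistent-type-gen} the labeled normal fan stays fixed throughout this deformation. No step is a real obstacle; the only thing to confirm is that \cref{thm:gen_perm-orbit-realization-space} supplies a genuine homeomorphism onto $\ospace^\cS$ with its subspace topology, and that theorem states exactly this.
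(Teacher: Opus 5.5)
Your proof is correct and follows the paper's route: combine the homeomorphism $\ospace^\cS\cong B$ from Theorem~\ref{thm:gen_perm-orbit-realization-space} with the contractibility of $B$, exactly as in the proof of Corollary~\ref{cor:contractible}. You justify contractibility of $B$ by convexity and a straight-line homotopy, which is a bit more careful than the paper's appeal to $B$ being an open cone: being an open cone does not by itself give contractibility (e.g.\ $\bR^2\setminus\{0\}$), whereas convexity does.
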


These generalize the results of Theorem~\ref{thm:realization-space} and Corollary~\ref{cor:contractible} to the setting of $W$-matroid polytopes. 

We now turn to subdivisions. A collection of $d$-dimensional polytopes $\{Q_i\}_{i=1}^r$ form a \emph{subdivision} of a $d$-dimensional polytope $P$ if $\cup_{i=1}^r Q_i = P$ and for all $i,j$, $Q_i \cap Q_j$ is a face of both $Q_i$ and $Q_j$. We are interested in subdivisions of a $W$-matroid polytope $P^\cS(\ba)$ that ``persist" for different choices of $\ba\in B$.

\begin{defn}
    Fix $\cS \subset W$ and a collection $\Theta=\{\cS_i\}_{i=1}^p$ of subsets of $W$. We say that $\Theta$ is a \emph{$\ba$-subdivision} of $\cS$ if $\{P^{\cS_i}(\ba)\}_{i=1}^p$ is a subdivision of $P^\cS(\ba)$. We say $\Theta$ is a \emph{persistent subdivision} of  $\cS$ if it is a $\ba$-subdivision of $\cS$ for every $\ba\in B$.
\end{defn}

We now turn to $\ba$-subdivisions of a $W$-matroid polytope into $W$-matroid polytopes. Such subdivisions, particularly the \emph{regular} ones, have been studied extensively in type $A$ for a particular choice of $\ba = (1,2,\dots, n)$, and are closely related to the flag Dressian and the tropical complete flag variety. Our main result in this area shows that all such subdivisions are persistent.

\begin{thm}
    Let $\cS, \cS_1, \dots, \cS_r \subset W$ and $\ba \in B$. Suppose $P^{\cS}(\ba)$ and $P^{\cS_i}(\ba)$ are $W$-matroid polytopes. Then $\Theta=\{\cS_i\}_{i=1}^p$ is a $\ba$-subdivision of $\cS$ if and only if it is a persistent subdivision.
\end{thm}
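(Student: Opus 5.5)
The nontrivial direction is that an $\ba$-subdivision is persistent; the converse holds by definition. I will follow the template of the proof of Theorem~\ref{thm:persistent-triangulation}, replacing simplices by $W$-matroid polytopes. The key input is Lemma~\ref{lem:consistent-type-gen}: the labeled normal fan, and hence the labeled face lattice, of a $W$-matroid polytope $P^{\cS'}(\ba)$ does not depend on $\ba\in B$.

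Fix $\ba'\in B$ and set $\ba_\tau:=\tau\ba'+(1-\tau)\ba$ for $\tau\in[0,1]$. Since $B$ is an open convex cone, $\ba_\tau\in B$. Let $P_\tau:=P^\cS(\ba_\tau)$, and let $\varphi_\tau$ send the vertex $v\cdot\ba$ to $v\cdot\ba_\tau$ for $v\in\cS$. We have $v\cdot\ba_\tau=\tau(v\cdot\ba')+(1-\tau)(v\cdot\ba)$. Hence each $\tau\mapsto\varphi_\tau(v\cdot\ba)$ is a line segment, so it is continuous, and $\varphi_0$ is the identity.

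A small point: the vertex set of $P^\cS(\ba)$ is exactly $\cS\cdot\ba$. All points of the orbit $W\cdot\ba$ lie on a sphere and are distinct because $\ba$ is generic, so each is a vertex of any convex hull of orbit points. Thus $\varphi_\tau$ is a bijection of vertex sets. The same reasoning shows that $P^{\cS_i}(\ba)$ has vertex set $\cS_i\cdot\ba$, and that $\cS_i\subseteq\cS$, because the vertices of a cell of a subdivision are points of $P^\cS(\ba)$ lying on the sphere, hence vertices of $P^\cS(\ba)$. So $Q^i_\tau:=\conv(\varphi_\tau(\cS_i\cdot\ba))=P^{\cS_i}(\ba_\tau)$.

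Next I verify the face-lattice hypotheses of Proposition~\ref{prop:path-subdivision}. By Lemma~\ref{lem:consistent-type-gen}, the maximal cone of the normal fan of $P_\tau$ labeled $v$ is the same cone $N_v$ for every $\tau$. Every cone of the normal fan is an intersection of maximal cones, and the face of $P_\tau$ dual to $\bigcap_{v\in A}N_v$ is $\conv(A\cdot\ba_\tau)$. So the identity map on cones shows that $\varphi_\tau$ carries faces of $P_0$ to faces of $P_\tau$ and preserves inclusion. This is exactly the argument in the proof of Theorem~\ref{thm:persistent-triangulation}. The same argument applies verbatim to each $Q^i_\tau$, since each $P^{\cS_i}(\ba)$ is a $W$-matroid polytope and its labeled normal fan is therefore independent of $\ba$.

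Dimensions must also agree. The affine span of $P^{\cS'}(\ba)$ is the orthogonal complement of the lineality space of its fan, so $\dim P^{\cS'}(\ba)$ is also constant in $\ba$, and the $Q^i_\tau$ stay full-dimensional in $P_\tau$.

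All hypotheses of Proposition~\ref{prop:path-subdivision} now hold, so $\{Q^i_\tau\}$ subdivides $P_\tau$ for every $\tau$, and $\tau=1$ gives the claim for $\ba'$. I expect the main obstacle to be the step asserting that the face-lattice isomorphism is induced by the vertex bijection $\varphi_\tau$, rather than merely existing abstractly. This requires the labeled version of Lemma~\ref{lem:consistent-type-gen} together with the identification of faces with vertex subsets via normal cones.
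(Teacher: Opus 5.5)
Your proposal is correct and matches the paper's proof. Like the paper, you interpolate $\ba_\tau$ linearly, use the same vertex maps $\varphi_\tau$, and replace Lemma~\ref{lem:consistent-type} by Lemma~\ref{lem:consistent-type-gen} to get the induced face-lattice isomorphisms for $P^{\cS}$ and each $P^{\cS_i}$ before applying Proposition~\ref{prop:path-subdivision}; your additional checks (vertex sets are exactly $\cS\cdot\ba$ and $\cS_i\cdot\ba$ by the sphere argument, $\cS_i\subseteq\cS$, and constant dimension) supply details the paper leaves implicit.
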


\begin{proof}
    The only statement that needs proof is that an $\ba$-subdivision will be an $\ba'$-subdivision for all $\ba'$. This is essentially identical to the proof of Theorem~\ref{thm:persistent-triangulation}, using Proposition~\ref{prop:path-subdivision} with the same family of maps $\varphi_\tau$. To see that $\varphi_\tau$ induces a poset isomorphism for all of the polytopes, use the argument in the third paragraph of that proof with the reference to Lemma~\ref{lem:consistent-type} replaced by a reference to Lemma~\ref{lem:consistent-type-gen}.
\end{proof}

\printbibliography

\end{document}